\newtheorem{fakt}{fakt}[section]
\newtheorem{Corollary}[fakt]{Corollary}
\newtheorem{Lemma}[fakt]{Lemma}
\newtheorem{Theorem}[fakt]{Theorem}
\theoremstyle{definition}
\newtheorem{Example}[fakt]{Example}
\newtheorem{Remark}[fakt]{Remark}
\newtheorem{Definition}[fakt]{Definition}
\newtheorem{Algorithm}[fakt]{Algorithm}
\newcommand{\N} {{\mathbb N}} \newcommand{\Z} {{\mathbb Z}} \newcommand{\Q} {{\mathbb Q}}  \newcommand{\C} {{\mathbb C}} \newcommand{\Pbb} {{\mathbb P}}
\newcommand{\Hilb}{\operatorname{HP}}
\newcommand{\Cl}{\operatorname{Cl}}
\newcommand{\Pic}{\operatorname{Pic}}
\newcommand{\Proj}{\operatorname{Proj}}
\newcommand{\Sym}{\operatorname{Sym}}
\newcommand{\Syz}{\operatorname{Syz}}
\newcommand{\Frob}[1]{\operatorname{F}^{{#1}*}}
\newcommand{\sign}{\operatorname{sign}}
\newcommand{\pos}{\operatorname{pos}}
\newcommand{\rank}{\operatorname{rank}}
\newcommand{\im}{\operatorname{im}}
\begin{document}
\title{Deciding Stability of Sheaves on Curves}

\author[Holger Brenner]{Holger Brenner}
\address{Holger Brenner\\
Institut für Mathematik\\
Universität Osnabrück\\
Albrechtstr. 28a\\
49076 Osnabrück
}
\curraddr{}
\email{hbrenner@uni-osnabrueck.de}
\thanks{}

\author[Jonathan Steinbuch]{Jonathan Steinbuch}
\address{Jonathan Steinbuch\\
Institut für Mathematik\\
Universität Osna\-brück\\
Albrechtstr. 28a\\
49076 Osnabrück}
\curraddr{}
\email{jonathan.steinbuch@uni-osnabrueck.de}
\thanks{}

\subjclass[2010]{14H60, 14 04}
\keywords{Semistability; Vector bundle; Symmetric powers; Syzygy sheaf.}

\begin{abstract}
We give an algorithm to determine whether a kernel sheaf over a smooth projective curve over an algebraically closed field is semistable. 
The algorithm uses symmetric powers to make destabilizing subbundles visible as global sections. 
 \end{abstract}

\maketitle

\section*{Introduction}

Among the vector bundles (locally free sheaves) in algebraic geometry the semistable vector bundles are of particular interest, as they form suitable moduli spaces (see \cite{mumGIT},  \cite{huybrechtslehn}). So in some sense the generic bundle is semistable. However, for a concretely given vector bundle it is in general quite difficult to decide whether it is semistable or not.

In this paper we provide an algorithm to decide semistability for vector bundles on smooth projective curves over an algebraically closed field of characteristic $0$. 
Related work on determing the semistability of bundles has been done in
\cite[Proposition 5.1, Corollary 6.4]{BreLook}, \cite{kaidkasprowitz} for sheaves over $\Pbb^n$ and in \cite[Section 2]{brestab2013} for curves in positive characteristic.

Recall that the slope of a vector bundle $ \mathcal{F}$ is
$ \mu(  \mathcal{F}  )  = \frac{\deg{\mathcal{F}}}{\rank{\mathcal{F}}}$, the quotient between the degree and the rank of the bundle, and that $\mathcal{F}$ is called semistable if for 
all subbundles $\mathcal{E}\subseteq \mathcal{F}$ the inequality  $\mu(\mathcal{E}) \leq \mu(\mathcal{F})$ holds. The basic problem in deciding semistability is to control all subbundles at once.
Either we have to find a destabilizing subbundle or we have to exclude its existence. If the bundle has negative degree and a global section $\neq 0$, when the bundle is not semistable. Moreover, global sections can be computed with methods from computer algebra, at least if the bundle is given by a graded module of a homogeneous coordinate ring of the curve. The problematic case is when a bundle of negative degree has a subbundle of nonnegative degree but without global sections.
Our main observation is here that destabilizing subbundles are visible as global sections of suitable symmetric powers and twists of the bundle. For the precise statements see Theorem \ref{maintheoremsym} and Theorem \ref{maintheorem0}.

As we want to decide semistability of a vector bundle $\mathcal{F}$ computationally by looking at global sections of symmetric powers, the curve and the bundle have to be given in a computationally accessible way. We fix a normal standard graded ring $S$ of dimension $2$ and consider $X=\Proj S$ with its very ample invertible sheaf $\mathcal{O}_X(1)$. In this setting we look at syzygy sheaves (kernel sheaves) given as
$$0\longrightarrow \mathcal{F} \longrightarrow \bigoplus \mathcal{O}_X(-d_i) \overset{A}{\longrightarrow} \mathcal{O}_X \, ,$$ 
where $A$ is a matrix with one row and homogeneous elements of degrees $d_i$ as entries. This representation for a vector bundle is not a big restriction as we elaborate in Remark \ref{norestrictionremark}.

In Lemma \ref{explicitsymextmatrices} we describe how to compute symmetric powers of a kernel bundle itself as a kernel bundle, thus making Theorem \ref{maintheoremsym} working as an algorithm.
The algorithm is demonstrated in the examples in Section \ref{examplesection}. It turns out that even for quite simple examples the standard computer algebra programs are not strong enough to deal with global sections of very high symmetric powers. We describe our implementation details using linear algebra methods in the last Section. 

We thank Jan-Luca Spellmann and Alexandre Tchernev for valuable comments.

\section{Degree and global sections}

We recall some basic notions for bundles on smooth projective curves. All torsion-free sheaves on a smooth curve are locally free, see \cite[Theorem II.1.1.6]{okonekschneiderspindler}. We use the words vector bundles and locally free sheaf interchangeably, and we always assume that they have finite rank.

\begin{Lemma}\label{sheafsequenceinvertible}
Let $\mathcal{F}$ be a locally free sheaf of rank $r$ on a curve. There exists an invertible sheaf $\mathcal{L}$, a locally free sheaf $\mathcal{F}'$ of rank $r-1$ and an exact sequence:
$$0\longrightarrow  \mathcal{L}\longrightarrow \mathcal{F} \longrightarrow \mathcal{F}' \longrightarrow 0.$$
\begin{proof}
See \cite[Lemma 1.15]{teixidorbundles}.
\end{proof} 
\end{Lemma}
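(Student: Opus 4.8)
The plan is to produce a rank-one subsheaf of $\mathcal{F}$, enlarge it to a subsheaf whose quotient is torsion-free, and then use the fact (recalled just above) that on a smooth curve a torsion-free sheaf is automatically locally free. This last fact does double duty: it makes the quotient locally free and it forces the rank-one sub to be invertible.

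First I would produce a nonzero morphism from a line bundle into $\mathcal{F}$. Fixing an ample invertible sheaf $\mathcal{O}_X(1)$, Serre's theorem gives $H^0(X,\mathcal{F}(n))\neq 0$ for $n\gg 0$. A nonzero global section of $\mathcal{F}(n)$ is a morphism $\mathcal{O}_X\to \mathcal{F}(n)$, and since $\mathcal{F}(n)$ is torsion-free this morphism is injective; twisting by $\mathcal{O}_X(-n)$ yields an injection $\mathcal{O}_X(-n)\hookrightarrow \mathcal{F}$ whose image $\mathcal{G}$ is a rank-one subsheaf of $\mathcal{F}$.

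Next I would saturate $\mathcal{G}$ inside $\mathcal{F}$. Let $T\subseteq \mathcal{F}/\mathcal{G}$ be the torsion subsheaf and let $\mathcal{L}$ be its preimage under the projection $\mathcal{F}\to \mathcal{F}/\mathcal{G}$. Then $\mathcal{F}':=\mathcal{F}/\mathcal{L}\cong(\mathcal{F}/\mathcal{G})/T$ is torsion-free by construction, and since $\mathcal{L}/\mathcal{G}=T$ is a torsion sheaf, $\mathcal{L}$ still has rank one. Because ranks are additive in a short exact sequence, $\mathcal{F}'$ has rank $r-1$.

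Finally I would invoke that torsion-free sheaves on a smooth curve are locally free. This makes $\mathcal{F}'$ locally free of rank $r-1$, and $\mathcal{L}$, being a torsion-free sheaf of rank one (a subsheaf of the locally free $\mathcal{F}$), is invertible. The resulting sequence $0\to\mathcal{L}\to\mathcal{F}\to\mathcal{F}'\to 0$ is the claim. The step requiring the most care is the saturation: one must check that passing to the preimage of the torsion genuinely produces a torsion-free quotient without altering the rank, which is exactly where smoothness of the curve enters to promote torsion-freeness to local freeness.
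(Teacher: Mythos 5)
Your argument is correct and complete, and it fills in a proof where the paper gives none: the paper's ``proof'' is only a pointer to Teixidor's Lemma 1.15, and your construction --- twist by an ample sheaf to obtain a nonzero section, saturate the resulting rank-one subsheaf so the quotient is torsion-free, and invoke that torsion-free sheaves on a smooth curve are locally free --- is exactly the standard argument found in that reference. The one hypothesis worth making explicit is projectivity of the curve (needed for Serre's theorem in your first step); the lemma as stated says only ``on a curve,'' but smooth projective is a standing assumption throughout the paper, so this is harmless.
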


For a noetherian, smooth scheme $X$, we have a natural isomorphism between the divisor class group and the Picard group: $\Cl X \cong \Pic X$. As such we have a divisor class for every isomorphism class of invertible sheaves on $X$ and in particular if $X$ is a smooth projective curve over an algebraically closed field we can define the degree of an invertible sheaf $\mathcal{L} = \mathcal{L}(D)$ by the degree of the corresponding divisor.

The \emph{degree} of a locally free sheaf of rank $r$ over a smooth projective curve $X$ is defined as
$$\deg \mathcal{F} := \deg\bigwedge^r\mathcal{F} \, .$$  
Hence the degree of a bundle is given as the degree of an invertible sheaf. 

The \emph{slope} of a locally free sheaf $\mathcal{F}$ on $X$ is $\mu(\mathcal{F}) := \frac{\deg{\mathcal{F}}}{r}$. 
A sheaf is called \emph{stable} if its slope is larger than the slope of any proper subsheaf and \emph{semistable} if its slope is at least as big as that of any subsheaf. 
We call a subsheaf \emph{destabilizing} if it violates the stability condition.

The theorem of Riemann-Roch for sheaves relates the degree and rank to global sections and the genus $g$ of the curve:
$$\deg \mathcal{F} = \chi(\mathcal{F}) + r(g-1).$$
Here
$\chi(\mathcal{F})$ denotes the Euler-Poincaré characteristic, which on a smooth projective curve of genus $g$ is
$$\chi(\mathcal{F}) = \dim H^0(X,\mathcal{F}) - \dim H^1(X,\mathcal{F}) $$
and $H^0(X,\mathcal{F})$ is the vector space of global sections of $\mathcal{F}$. 
With the theorem of Riemann-Roch we get the following inequality:
$$\dim H^0(X,\mathcal{F}) \geq \deg \mathcal{F} - r(g-1).$$
From this follows immediately:

\begin{Lemma} \label{globalsectionroch}
Let $\mathcal{F}$ be a sheaf on a smooth projective curve $X$. 
If $\mu(\mathcal{F}) > g-1$ then $\mathcal{F}$ must have a nontrivial global section. 
\end{Lemma}

We will also use the following fact.

\begin{Lemma} \label{noglobalnegativesemistable}
Let $\mathcal{F}$ be a sheaf on a smooth projective curve $X$. 
If $\mu(\mathcal{F}) < 0 $ and $\mathcal{F}$ has a nontrivial global section, then $\mathcal{F}$ is not semistable. 
\end{Lemma}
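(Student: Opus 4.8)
The plan is to turn the given global section directly into a destabilizing subsheaf, so that no control over \emph{all} subsheaves is needed. A nonzero global section $s \in H^0(X, \mathcal{F})$ is the same datum as a nonzero morphism of sheaves $\varphi \colon \mathcal{O}_X \to \mathcal{F}$. First I would argue that $\varphi$ is injective: since $X$ is an integral curve, $\mathcal{O}_X$ is a rank-one torsion-free sheaf, so the kernel of any nonzero map out of it has rank $0$ and is therefore a torsion subsheaf of $\mathcal{O}_X$; as $\mathcal{O}_X$ is torsion-free, this kernel must vanish. Hence $\varphi$ embeds $\mathcal{O}_X$ as a subsheaf of $\mathcal{F}$, with $\mathcal{O}_X \cong \im \varphi$.

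Next I would compute the slope of this subsheaf. The image $\im \varphi \subseteq \mathcal{F}$ has rank $1$ and degree $0$, so $\mu(\im \varphi) = 0$. By hypothesis $\mu(\mathcal{F}) < 0$, and therefore $\mu(\im \varphi) = 0 > \mu(\mathcal{F})$. Thus $\im \varphi$ is a subsheaf whose slope strictly exceeds that of $\mathcal{F}$, which contradicts the definition of semistability. Consequently $\mathcal{F}$ is not semistable. If one prefers a destabilizing \emph{subbundle} rather than merely a subsheaf, I would replace $\im \varphi$ by its saturation $\mathcal{L} \subseteq \mathcal{F}$, a line bundle containing $\im \varphi$; saturating can only increase the degree, so $\deg \mathcal{L} \geq \deg \im \varphi = 0$ and hence $\mu(\mathcal{L}) \geq 0 > \mu(\mathcal{F})$, yielding the same conclusion.

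I do not expect a genuine obstacle in this argument. The only point requiring any care is the injectivity of $\varphi$, which rests on the integrality (and reducedness) of the curve, together with the harmless distinction between exhibiting a destabilizing subsheaf and a destabilizing subbundle. Everything else is an immediate slope comparison.
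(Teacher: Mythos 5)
Your argument is correct and is essentially the paper's own proof, just written out in more detail: the paper likewise notes that a nontrivial global section generates via $\mathcal{O}_X\rightarrow\mathcal{F}$ a subsheaf of nonnegative slope, which cannot exist in a semistable sheaf of negative slope. The extra care you take with injectivity and with passing to the saturation is sound but not a different route.
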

\begin{proof}
Every nontrivial global section generates via $\mathcal{O}_X\rightarrow \mathcal{F}$ a subsheaf of nonnegative slope, which can not exist in a semistable sheaf of negative slope.
\end{proof}

We call such a nontrivial global section which shows that $\mathcal{F}$ is not semistable a \emph{destabilizing global section} of $\mathcal{F}$.

\section{Symmetric and exterior powers}

In this section we describe the multilinear operations we use.
We will use the sheaf versions of the tensor product, the symmetric power $\Sym^n$ and the exterior power $\bigwedge^n$. 
These are each the sheafifications of their respective module versions. 
We want to apply these operations to short exact sequences. Some versions of the following sequences are known, see \cite[Proposition 3]{sakaisymmetric}, \cite{lebelt}, \cite{avramovsymmetric}, but it is difficult to find an explicite source for what we need.

\begin{Lemma} \label{powersexactsequences}
Let $0 \rightarrow \mathcal{E} \overset{\psi}{\rightarrow} \mathcal{F} \overset{\varphi}{\rightarrow} \mathcal{G} \rightarrow 0$ be a short exact sequence of locally free sheaves over a scheme of characteristic $0$. This induces for every $n\in \N_{>0}$ exact sequences
\begin{align*}
0\longrightarrow \bigwedge^n\mathcal{E}\longrightarrow \bigwedge^n\mathcal{F} \longrightarrow & \left(\bigwedge^{n-1}\mathcal{F}\right)\otimes\Sym^1\mathcal{G} \longrightarrow \\ &\left(\bigwedge^{n-2}\mathcal{F}\right)\otimes\Sym^2\mathcal{G} \longrightarrow \\
& \cdots \longrightarrow\\
& \left(\bigwedge^2\mathcal{F}\right)\otimes\Sym^{n-2}\mathcal{G} \longrightarrow \\ & \left(\bigwedge^1\mathcal{F}\right)\otimes\Sym^{n-1}\mathcal{G}  \longrightarrow
 \Sym^n\mathcal{G}\longrightarrow 0.
\end{align*}
The leftmost map in this sequence is $\bigwedge^n\psi$. The maps in the middle of the sequence
$$\left(\bigwedge^k\mathcal{F}\right)\otimes\Sym^{n-k}\mathcal{G} \longrightarrow \left(\bigwedge^{k-1}\mathcal{F}\right)\otimes\Sym^{n-k+1}\mathcal{G},$$
are given by
\begin{multline*}f_1\wedge \ldots\wedge f_k  \otimes g_{k+1}\cdots g_n \mapsto \\ \sum_{i=1}^k(-1)^{i-1} f_1\wedge \ldots \wedge f_{i-1} \wedge f_{i+1} \wedge \ldots \wedge f_k \otimes \varphi(f_i)\cdot g_{k+1}\cdots g_n .
\end{multline*}
It also induces short exact sequences
\begin{align*}
0\longrightarrow \Sym^n\mathcal{E}\longrightarrow \Sym^n\mathcal{F} \longrightarrow & \left(\Sym^{n-1}\mathcal{F}\right)\otimes\bigwedge^1\mathcal{G} \longrightarrow \\ &\left(\Sym^{n-2}\mathcal{F}\right)\otimes\bigwedge^2\mathcal{G} \longrightarrow \\
& \cdots \longrightarrow\\
& \left(\Sym^2\mathcal{F}\right)\otimes\bigwedge^{n-2}\mathcal{G} \longrightarrow \\ & \left(\Sym^1\mathcal{F}\right)\otimes\bigwedge^{n-1}\mathcal{G}  \longrightarrow
 \bigwedge^n\mathcal{G}\longrightarrow 0.
\end{align*}
The leftmost map in this sequence is $\Sym^n\psi$. The maps in the middle of the sequence 
$$\left(\Sym^k\mathcal{F}\right)\otimes\bigwedge^{n-k}\mathcal{G} \longrightarrow \left(\Sym^{k-1}\mathcal{F}\right)\otimes\bigwedge^{n-k+1}\mathcal{G},$$
are given by
\begin{multline*}f_1 \cdots f_k  \otimes g_{k+1}\wedge \ldots\wedge g_n \mapsto \\ \sum_{i=1}^k f_1 \cdots f_{i-1} \cdot f_{i+1} \cdots f_k \otimes \varphi(f_i)\wedge g_{k+1}\wedge \ldots \wedge  g_n.
\end{multline*}
\begin{proof}
Since exactness is a local property we can assume we are working with free modules over a ring and that the original sequence is splitting, i.e. $\mathcal{F} \cong \mathcal{E}\oplus\mathcal{G}$.

We prove the exactness of the first sequence by induction over the rank $r$ of $\mathcal{E}$. 
For $r=0$ we have $\mathcal{F}\cong \mathcal{G}$ and the sequence is a well-known sequence of multilinear algebra (see for example \cite[ Aufgabe 86.20]{schejastorch}), where the maps are as stated as in our Lemma.

For $r\geq 1$ we fix an element $v$ in a basis of $\mathcal{E}$. 
Because $\mathcal{F}\cong \mathcal{E}\oplus\mathcal{G}$ it is also an element of a basis of $\mathcal{F}$. 
We write $\mathcal{E}=\langle v \rangle \oplus \mathcal{U}$, where $\mathcal{U}$ is a free module of rank $r-1$. Similarly we get $\mathcal{F}=\langle v \rangle \oplus \mathcal{U}'$.

We have $\bigwedge^k\mathcal{F} \cong \bigwedge^{k}\mathcal{U}'\oplus \left(\bigwedge^{k-1}\mathcal{U}'\right)\otimes \langle v\rangle$ by the map which concentrates every contribution of $v$ to the last component.
Because taking the tensor product with a free rank 1 module is an isomorphism we even have $\bigwedge^k\mathcal{F} \cong \bigwedge^{k}\mathcal{U}'\oplus \bigwedge^{k-1}\mathcal{U}'$. Similarly for $\bigwedge^k\mathcal{E}$.

This allows us to write the sequence as the direct sum of two sequences we know are exact by induction:

\begin{align*}
0\longrightarrow \bigwedge^n\mathcal{U}\longrightarrow \bigwedge^n\mathcal{U}' \longrightarrow & \left(\bigwedge^{n-1}\mathcal{U}'\right)\otimes\Sym^1\mathcal{G} \longrightarrow \\
\cdots \longrightarrow & 
\left(\bigwedge^1\mathcal{U}'\right)\otimes\Sym^{n-1}\mathcal{G}  \longrightarrow
\Sym^n\mathcal{G}\longrightarrow 0
\end{align*}

and

\begin{align*}
0\longrightarrow \bigwedge^{n-1}\mathcal{U}\longrightarrow \bigwedge^{n-1}\mathcal{U}' \longrightarrow & \left(\bigwedge^{n-2}\mathcal{U}'\right)\otimes\Sym^1\mathcal{G} \longrightarrow \\
\cdots \longrightarrow
& \left(\bigwedge^1\mathcal{U}'\right)\otimes\Sym^{n-2}\mathcal{G} \\
 \longrightarrow &\Sym^{n-1}\mathcal{G}\longrightarrow 0 \longrightarrow 0.
\end{align*}

Thus the sum sequence is exact as well. 
We want to prove that the map of the sum sequence is correct. 
Take a look at the diagram.
$$\begin{tikzcd}
\left(\bigwedge^{k}\mathcal{F}\right)\otimes\Sym^{n-k}\mathcal{G} \arrow[r] & \left(\bigwedge^{k-1}\mathcal{F}\right)\otimes\Sym^{n-k+1}\mathcal{G}\\
\begin{matrix}\left(\bigwedge^{k}\mathcal{U}'\right)\otimes\Sym^{n-k}\mathcal{G}\\
\oplus\\ \left(\bigwedge^{k-1}\mathcal{U}'\right)\otimes\Sym^{n-k}\mathcal{G}\end{matrix} \arrow[u] \arrow[r, shift left=6] \arrow[r, shift right=6] & \begin{matrix}\left(\bigwedge^{k-1}\mathcal{U}'\right)\otimes\Sym^{n-k+1}\mathcal{G} \\
\oplus\\ \left(\bigwedge^{k-1}\mathcal{U}'\right)\otimes\Sym^{n-k+1}\mathcal{G}\end{matrix} \arrow[u]
\end{tikzcd}$$

If we map an element $f_1\wedge\ldots\wedge f_k \otimes g + f'_1\wedge\ldots\wedge f'_{k-1} \otimes g'$ via the lower right route we get \begin{align*}&\sum_{i=1}^k(-1)^{i-1}f_1\wedge\ldots\wedge f_{i-1} \wedge f_{i+1} \wedge \ldots \wedge f_k\otimes \varphi(f_i) \cdot g \\
+ &\sum_{i=1}^{k-1}(-1)^{i-1}f'_1\wedge\ldots\wedge f'_{i-1} \wedge f'_{i+1} \wedge \ldots \wedge f'_{k-1}\wedge v \otimes \varphi(f'_i) \cdot g'.\end{align*} 
If we map via the upper left route we get \begin{align*}&\sum_{i=1}^k(-1)^{i-1}f_1\wedge\ldots\wedge f_{i-1} \wedge f_{i+1} \wedge \ldots \wedge f_k\otimes \varphi(f_i) \cdot g\\ + &\sum_{i=1}^{k-1}(-1)^{i-1}f'_1\wedge\ldots\wedge f'_{i-1} \wedge f'_{i+1} \wedge \ldots \wedge f'_{k-1}\wedge v \otimes \varphi(f'_i) \cdot g' \\+ &(-1)^{k-1}f'_1\wedge\ldots\wedge f'_{k-1} \otimes \varphi(v) \cdot g',\end{align*} but since $v\in \mathcal{E}$ we have $\varphi(v) = 0$, so the diagram commutes.

The second sequence works similarly. 
Again we start with a locally free sheaf $\mathcal{E}$ of rank $0$,  where the dual of the exterior power case gives us the sequence. 
For the dual sheaves we have the canonical isomorphisms $(\bigwedge^k\mathcal{F})^\vee \cong \bigwedge^k(\mathcal{F}^\vee)$ and as we work over a ring of characteristic $0$ we also have $(\Sym^k\mathcal{F})^\vee \cong \Sym^k(\mathcal{F}^\vee)$ \cite[Satz 83.7 and Satz 86.12]{schejastorch}.

We can also write the symmetric product of $\mathcal{F}\cong \mathcal{U}'\oplus \langle v \rangle$ as a direct sum as follows:
$$\Sym^k\mathcal{F}\cong \Sym^k\mathcal{U}'\oplus\Sym^{k-1}\mathcal{F}\otimes \langle v \rangle \cong \Sym^k\mathcal{U}'\oplus\Sym^{k-1}\mathcal{F}.$$
Notice how this time we have $\mathcal{F}$ itself in the second summand, so we have to do a double induction over the exponent $k$ and the rank. 
\end{proof}
\end{Lemma}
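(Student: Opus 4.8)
The plan is to test exactness locally and reduce everything to a computation with free modules. Since exactness of a complex of sheaves can be checked stalkwise, I would pass to the stalk at a point $x$, where $0 \to \mathcal{E}_x \to \mathcal{F}_x \to \mathcal{G}_x \to 0$ is a short exact sequence of finitely generated free modules over the local ring $\mathcal{O}_{X,x}$; because $\mathcal{G}_x$ is free and hence projective, this sequence splits, so I may assume from the outset that we work over a commutative ring $R \supseteq \Q$ with $\mathcal{F} = \mathcal{E} \oplus \mathcal{G}$ and $\mathcal{E}, \mathcal{G}$ free. Conceptually, the first complex is the Koszul-type complex attached to the surjection $\varphi \colon \mathcal{F} \to \mathcal{G}$, viewed as an element of $\Hom(\mathcal{F}, \Sym^1 \mathcal{G})$, with differential given by contraction of the exterior factor against $\varphi$ followed by multiplication into the symmetric factor; the content of the lemma is then that this complex is exact, with kernel $\bigwedge^n\mathcal{E}$ at the left end and cokernel $\Sym^n\mathcal{G}$ at the right end.

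For the first sequence I would induct on $r = \rank \mathcal{E}$. In the base case $r = 0$ we have $\mathcal{F} \cong \mathcal{G}$ and the asserted sequence is the classical exact complex of multilinear algebra relating exterior and symmetric powers; here I would cite the standard reference and merely check that the displayed maps agree with the contraction-multiplication formula in the statement. For the inductive step I would fix a basis vector $v$ of the free module $\mathcal{E}$ and split off the corresponding rank-one summand, writing $\mathcal{E} = \langle v\rangle \oplus \mathcal{U}$ and $\mathcal{F} = \langle v \rangle \oplus \mathcal{U}'$. Using the canonical isomorphism $\bigwedge^k \mathcal{F} \cong \bigwedge^k \mathcal{U}' \oplus \bigwedge^{k-1}\mathcal{U}'$ (collecting the terms involving $v$ into the second summand and then discarding the rank-one tensor factor), the entire complex becomes the direct sum of the corresponding complexes for the shorter sequence $0 \to \mathcal{U} \to \mathcal{U}' \to \mathcal{G} \to 0$ in degrees $n$ and $n-1$, both exact by the induction hypothesis, whence the total complex is exact.

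The crux of the inductive step is to confirm that the differential is block-diagonal for this splitting, so that it genuinely is the direct sum of the two smaller differentials. I would verify this by comparing the two routes through the decomposition: the only discrepancy between the two composites is a single term carrying the factor $\varphi(v)$, and this term vanishes because $v \in \mathcal{E} = \ker \varphi$. This vanishing is the one load-bearing observation that makes the cross terms disappear; the remaining work is bookkeeping of signs in the contraction formula.

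Finally, for the symmetric sequence I would dualize. Over a ring containing $\Q$ one has the natural isomorphisms $(\bigwedge^k M)^\vee \cong \bigwedge^k(M^\vee)$ and $(\Sym^k M)^\vee \cong \Sym^k(M^\vee)$ for free $M$, so dualizing the already-established exterior sequence for the dual short exact sequence $0 \to \mathcal{G}^\vee \to \mathcal{F}^\vee \to \mathcal{E}^\vee \to 0$ should produce the claimed symmetric sequence, with the transposed maps matching the stated multiplication-wedge formula. The point demanding extra care, and the main obstacle on this side, is the induction itself: because $\Sym^k\mathcal{F}$ decomposes as $\Sym^k \mathcal{U}' \oplus \Sym^{k-1}\mathcal{F}$, where $\mathcal{F}$ (not $\mathcal{U}'$) reappears in the second summand, the recursion mixes different exponents and forces a double induction on the pair consisting of the exponent and the rank, rather than the single induction that sufficed in the exterior case.
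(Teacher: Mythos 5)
Your proposal follows the paper's own argument essentially step for step: localize and split, induct on $\rank\mathcal{E}$ with the classical multilinear-algebra complex as base case, decompose $\bigwedge^k\mathcal{F}\cong\bigwedge^k\mathcal{U}'\oplus\bigwedge^{k-1}\mathcal{U}'$ after splitting off $\langle v\rangle$, verify block-diagonality of the differential via the vanishing of the single cross term $\varphi(v)=0$, and obtain the symmetric sequence by dualizing in characteristic $0$ with the double induction forced by $\Sym^k\mathcal{F}\cong\Sym^k\mathcal{U}'\oplus\Sym^{k-1}\mathcal{F}$. The argument is correct and matches the paper's proof in both structure and the load-bearing details.
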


For us the most important part of these sequences are the maps $\bigwedge^n\mathcal{F} \rightarrow  \left(\bigwedge^{n-1}\mathcal{F}\right)\otimes\Sym^1\mathcal{G}$ and $\Sym^n\mathcal{F} \rightarrow  \left(\Sym^{n-1}\mathcal{F}\right)\otimes\bigwedge^1\mathcal{G},$ as they allow us to explicitly describe the exterior and symmetric powers of a kernel bundle as another kernel bundle, see Lemma \ref{explicitsymextmatrices}.

Because we will first take the exterior power and afterwards the symmetric power we will need the following two lemmas which show that we can in the same way describe the kernel of a sequence which is only left exact.
The two lemmas generalize \cite[Proposition 4.1]{kaidkasprowitz}. 

\begin{Lemma} \label{powerskernelsheaves}
Let $0 \rightarrow \mathcal{E} \rightarrow \mathcal{F} \rightarrow \mathcal{G}\rightarrow \ldots \rightarrow 0$ be an exact sequence of locally free sheaves over a scheme over a field of characteristic $0$, where $\varphi:\mathcal{F} \rightarrow \mathcal{G}$ is the second map.
Then $\bigwedge^n\mathcal{E}$ is the kernel of the map
\begin{multline*}\bigwedge^n\mathcal{F} \longrightarrow \left(\bigwedge^{n-1}\mathcal{F}\right)\otimes\mathcal{G}, \\ f_1\wedge \ldots\wedge f_n\mapsto \sum_{i=1}^n(-1)^{i-1} f_1\wedge \ldots \wedge f_{i-1} \wedge f_{i+1} \wedge \ldots \wedge f_k \otimes \varphi(f_i).\end{multline*}
	
Also, $\Sym^n\mathcal{E}$ is the kernel of the map
\begin{multline*}\Sym^n\mathcal{F} \longrightarrow \left(\Sym^{n-1}\mathcal{F}\right)\otimes\mathcal{G}, \\ f_1\cdots f_n\mapsto \sum_{i=1}^n f_1\cdots f_{i-1} \cdot f_{i+1} \cdots f_k \otimes \varphi(f_i).\end{multline*}
\begin{proof}
Because all sheaves in the sequence are locally free so are the kernels by induction starting from the right. 
We take the short exact sequence $0 \rightarrow \mathcal{E} \rightarrow \mathcal{F} \rightarrow \im \varphi \rightarrow 0$ and construct the sequences of Lemma \ref{powersexactsequences}. 

Let's look at the sequence for the exterior power. The kernel of the map $\bigwedge^n\mathcal{F} \rightarrow \left(\bigwedge^{n-1}\mathcal{F}\right)\otimes\im \varphi$ is $\bigwedge^n\mathcal{E}$. 
Because of the local freeness of the involved modules the map $\left(\bigwedge^{n-1}\mathcal{F}\right)\otimes\im \varphi \rightarrow \left(\bigwedge^{n-1}\mathcal{F}\right)\otimes\mathcal{G}$ is injective, so the kernel does not change if we concatenate with this map. 
The same is true for $\Sym^n$.
\end{proof}
\end{Lemma}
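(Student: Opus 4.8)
The plan is to reduce to the short exact situation already handled in Lemma~\ref{powersexactsequences} and then to correct for the difference between $\im\varphi$ and $\mathcal{G}$. The first thing I would verify is that $\im\varphi$ is itself locally free. This follows by peeling the exact sequence from the right: the rightmost nonzero map is a surjection onto a locally free, hence flat, sheaf, so it splits locally and its kernel is a local direct summand of a locally free sheaf, therefore locally free; but this kernel is precisely the next image to the left, and repeating the argument shows that every image occurring in the sequence is locally free. In particular $\im\varphi$ is locally free, and we obtain a short exact sequence
$$0 \longrightarrow \mathcal{E} \longrightarrow \mathcal{F} \longrightarrow \im\varphi \longrightarrow 0$$
of locally free sheaves, exactness at $\mathcal{E}$ and at $\mathcal{F}$ being inherited directly from the original sequence.

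Next I would apply Lemma~\ref{powersexactsequences} to this short exact sequence, with the sheaf called $\mathcal{G}$ there now taken to be $\im\varphi$. The beginning of the resulting exterior-power sequence reads
$$0 \longrightarrow \bigwedge^n\mathcal{E} \longrightarrow \bigwedge^n\mathcal{F} \longrightarrow \left(\bigwedge^{n-1}\mathcal{F}\right)\otimes\im\varphi,$$
and exactness at $\bigwedge^n\mathcal{F}$ says exactly that $\bigwedge^n\mathcal{E}$ is the kernel of the map $\bigwedge^n\mathcal{F} \to (\bigwedge^{n-1}\mathcal{F})\otimes\im\varphi$, which is given by the formula in the statement (with $\varphi$ viewed as landing in $\im\varphi$). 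The symmetric-power sequence of Lemma~\ref{powersexactsequences} applies verbatim and identifies $\Sym^n\mathcal{E}$ as the kernel of $\Sym^n\mathcal{F} \to (\Sym^{n-1}\mathcal{F})\otimes\im\varphi$.

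What remains is to replace $\im\varphi$ by $\mathcal{G}$ in the target, and this is the only genuinely new point compared with Lemma~\ref{powersexactsequences}. The map written in the statement factors as the composite of the map above with the map $(\bigwedge^{n-1}\mathcal{F})\otimes\im\varphi \to (\bigwedge^{n-1}\mathcal{F})\otimes\mathcal{G}$ induced by the inclusion $\im\varphi \hookrightarrow \mathcal{G}$. Since $\bigwedge^{n-1}\mathcal{F}$ is locally free and hence flat, tensoring the injection $\im\varphi \hookrightarrow \mathcal{G}$ with it preserves injectivity, so this second map is injective and the kernel of the composite coincides with the kernel of the first map, namely $\bigwedge^n\mathcal{E}$. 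The symmetric case is identical, using flatness of $\Sym^{n-1}\mathcal{F}$. I expect the main obstacle to be the bookkeeping in the first step, namely confirming that every image in the long exact sequence is locally free so that Lemma~\ref{powersexactsequences} genuinely applies, rather than the concluding flatness argument, which is routine.
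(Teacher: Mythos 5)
Your proposal is correct and follows essentially the same route as the paper: pass to the short exact sequence $0\to\mathcal{E}\to\mathcal{F}\to\im\varphi\to 0$ after establishing local freeness of the images by induction from the right, apply Lemma~\ref{powersexactsequences}, and then observe that tensoring the inclusion $\im\varphi\hookrightarrow\mathcal{G}$ with the locally free (hence flat) sheaf $\bigwedge^{n-1}\mathcal{F}$ or $\Sym^{n-1}\mathcal{F}$ preserves injectivity, so the kernel is unchanged. Your write-up is in fact somewhat more explicit than the paper's about why the images are locally free.
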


\begin{Lemma} \label{powerskernelsheavescurves}
Let $0 \rightarrow \mathcal{E} \rightarrow \mathcal{F} \rightarrow \mathcal{G}$ be an exact sequence of locally free sheaves over a smooth curve over a field of characteristic $0$, where $\varphi:\mathcal{F} \rightarrow \mathcal{G}$ is the second map.
Then $\bigwedge^n\mathcal{E}$ and $\Sym^n\mathcal{E}$ have the same description as in Lemma \ref{powerskernelsheaves}.
\begin{proof}
The image $\im \varphi \subseteq \mathcal{G}$ is torsion free as a subsheaf of $\mathcal{G}$, thus $\im \varphi$ is locally free. We take the short exact sequence $0 \rightarrow \mathcal{E} \rightarrow \mathcal{F} \rightarrow \im \varphi \rightarrow 0$ and construct the sequences of Lemma \ref{powersexactsequences}. 

The rest of the proof is the same as for Lemma \ref{powerskernelsheaves}.
\end{proof}
\end{Lemma}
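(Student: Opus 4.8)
The plan is to reduce to the situation of Lemma \ref{powerskernelsheaves} by completing the given left exact sequence to a short exact sequence whose third term is again locally free. The only place where Lemma \ref{powerskernelsheaves} uses more than left exactness is its appeal to Lemma \ref{powersexactsequences}, whose construction takes as input a short exact sequence of locally free sheaves. In Lemma \ref{powerskernelsheaves} the local freeness of the relevant kernels, and in particular of $\im\varphi$, is obtained from the longer exact sequence $0 \to \mathcal{E} \to \mathcal{F} \to \mathcal{G} \to \ldots \to 0$ by induction starting from the right. Here that hypothesis is unavailable, so the whole task is to produce local freeness of $\im\varphi$ from smoothness of the curve instead.

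First I would factor $\varphi$ through its image, obtaining $0 \to \mathcal{E} \to \mathcal{F} \to \im\varphi \to 0$ together with the inclusion $\im\varphi \hookrightarrow \mathcal{G}$. The key step, and the one specific to curves, is that $\im\varphi$ is locally free: being a coherent subsheaf of the locally free sheaf $\mathcal{G}$ it is torsion-free, and on a smooth curve every torsion-free coherent sheaf is locally free, as recalled in the opening section via \cite[Theorem II.1.1.6]{okonekschneiderspindler}. This is exactly what replaces the ``exactness further to the right'' used in Lemma \ref{powerskernelsheaves}. With the short exact sequence $0 \to \mathcal{E} \to \mathcal{F} \to \im\varphi \to 0$ in hand, I would invoke Lemmas \ref{powersexactsequences} and \ref{powerskernelsheaves} verbatim to conclude that $\bigwedge^n\mathcal{E}$ is the kernel of $\bigwedge^n\mathcal{F} \to \left(\bigwedge^{n-1}\mathcal{F}\right)\otimes\im\varphi$ and that $\Sym^n\mathcal{E}$ is the kernel of $\Sym^n\mathcal{F} \to \left(\Sym^{n-1}\mathcal{F}\right)\otimes\im\varphi$.

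It then remains to check that post-composing with the map induced by $\im\varphi \hookrightarrow \mathcal{G}$ leaves the kernel unchanged, so that the formulas stated with target $\left(\bigwedge^{n-1}\mathcal{F}\right)\otimes\mathcal{G}$ and $\left(\Sym^{n-1}\mathcal{F}\right)\otimes\mathcal{G}$ are valid. This holds because tensoring the injection $\im\varphi \hookrightarrow \mathcal{G}$ with the locally free, hence flat, sheaf $\bigwedge^{n-1}\mathcal{F}$ (respectively $\Sym^{n-1}\mathcal{F}$) again yields an injection; consequently the composite $\bigwedge^n\mathcal{F} \to \left(\bigwedge^{n-1}\mathcal{F}\right)\otimes\mathcal{G}$ has the same kernel as the first map, and likewise for the symmetric power. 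I expect the only genuine obstacle to be the local freeness of $\im\varphi$; once smoothness of the curve is used to secure it, everything else is a direct citation of the previous two lemmas together with flatness of locally free sheaves.
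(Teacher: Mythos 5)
Your argument is correct and matches the paper's proof essentially verbatim: both obtain local freeness of $\im\varphi$ from its torsion-freeness as a subsheaf of $\mathcal{G}$ on a smooth curve, apply Lemma \ref{powersexactsequences} to the short exact sequence $0\to\mathcal{E}\to\mathcal{F}\to\im\varphi\to 0$, and then use injectivity of the map tensored with the locally free sheaf to see that composing with $\im\varphi\hookrightarrow\mathcal{G}$ does not change the kernel. No differences worth noting.
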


\section{Rank, degree and slope for symmetric powers}

To work more easily with the degree and slope of the sheaves and their symmetric and exterior powers involved we present some rank and degree computations. First note that the degree and the rank are additive.

\begin{Lemma}\label{degranktor}
Let $\mathcal{E}$ and $\mathcal{F}$ be locally free sheaves over a smooth projective curve. 
We have $\deg(\mathcal{F}\otimes\mathcal{E}) = \rank\mathcal{E}\cdot\deg\mathcal{F}+\rank\mathcal{F}\cdot\deg\mathcal{E}$ and $\rank\mathcal{F}\otimes\mathcal{E} = \rank\mathcal{F}\cdot \rank\mathcal{E}$. 
\begin{proof}
\cite[Lemma 1.16]{teixidorbundles}.
\end{proof}
\end{Lemma}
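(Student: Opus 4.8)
The plan is to establish the two assertions separately, treating the rank by a local computation and the degree by reduction to the case of invertible sheaves. For the rank, I would argue locally: tensor products commute with localization, so at each point the stalk of $\mathcal{F}\otimes\mathcal{E}$ is the tensor product of free modules of ranks $\rank\mathcal{F}$ and $\rank\mathcal{E}$ over the local ring, and the rank of such a tensor product is the product of the ranks. This gives $\rank(\mathcal{F}\otimes\mathcal{E}) = \rank\mathcal{F}\cdot\rank\mathcal{E}$ directly.

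For the degree, write $r = \rank\mathcal{F}$ and $s = \rank\mathcal{E}$ and induct on $s$. The key inputs, both already available, are that tensoring a short exact sequence of locally free sheaves by a locally free (hence flat) sheaf preserves exactness, and that degree and rank are additive on short exact sequences. In the base case $s=1$, so $\mathcal{E}=\mathcal{L}$ is invertible, I would further induct on $r$ using Lemma \ref{sheafsequenceinvertible}: a filtration $0\to\mathcal{M}\to\mathcal{F}\to\mathcal{F}'\to 0$ with $\mathcal{M}$ invertible and $\rank\mathcal{F}'=r-1$, tensored with $\mathcal{L}$, reduces the claim to the case where both factors are invertible. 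For two invertible sheaves the identity $\deg(\mathcal{L}\otimes\mathcal{M})=\deg\mathcal{L}+\deg\mathcal{M}$ is exactly the statement that degree is a group homomorphism on $\Pic X\cong\Cl X$, since $\mathcal{L}(D)\otimes\mathcal{L}(D')\cong\mathcal{L}(D+D')$ and $\deg(D+D')=\deg D+\deg D'$.

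For the inductive step on $s$, apply Lemma \ref{sheafsequenceinvertible} to $\mathcal{E}$ to obtain $0\to\mathcal{L}\to\mathcal{E}\to\mathcal{E}'\to 0$ with $\mathcal{L}$ invertible and $\rank\mathcal{E}'=s-1$. Tensoring with $\mathcal{F}$ yields the short exact sequence $0\to\mathcal{F}\otimes\mathcal{L}\to\mathcal{F}\otimes\mathcal{E}\to\mathcal{F}\otimes\mathcal{E}'\to 0$, so by additivity of degree $\deg(\mathcal{F}\otimes\mathcal{E})=\deg(\mathcal{F}\otimes\mathcal{L})+\deg(\mathcal{F}\otimes\mathcal{E}')$. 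The base case gives the first summand as $\deg\mathcal{F}+r\deg\mathcal{L}$, the induction hypothesis gives the second as $(s-1)\deg\mathcal{F}+r\deg\mathcal{E}'$, and adding them together with $\deg\mathcal{L}+\deg\mathcal{E}'=\deg\mathcal{E}$ produces $s\deg\mathcal{F}+r\deg\mathcal{E}$, as desired.

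I expect the main obstacle to be justifying that the additivity of degree over short exact sequences is legitimate here; concretely, this rests on the isomorphism $\det\mathcal{B}\cong\det\mathcal{A}\otimes\det\mathcal{C}$ for a short exact sequence $0\to\mathcal{A}\to\mathcal{B}\to\mathcal{C}\to 0$ together with the homomorphism property of $\deg$ on $\Pic X$. An alternative, more direct route is to prove the determinant identity $\bigwedge^{rs}(\mathcal{F}\otimes\mathcal{E})\cong(\det\mathcal{F})^{\otimes s}\otimes(\det\mathcal{E})^{\otimes r}$ by a local multilinear-algebra computation and then take degrees; this bypasses the induction but shifts the difficulty to verifying that the local isomorphisms glue.
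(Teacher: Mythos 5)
Your argument is correct. Note, though, that the paper gives no proof of its own here: it simply cites \cite[Lemma 1.16]{teixidorbundles}, so there is nothing internal to compare against, and what you have written is essentially the standard argument that the citation stands in for. The rank statement by localization is immediate, and the degree statement by double induction --- first on $\rank\mathcal{E}$ via Lemma \ref{sheafsequenceinvertible}, tensoring the filtration with the flat sheaf $\mathcal{F}$ and using additivity of degree, then on $\rank\mathcal{F}$ in the base case to reduce to two invertible sheaves, where $\deg$ is a homomorphism on $\Pic X\cong\Cl X$ --- is complete. You are right that the only input needing independent justification is additivity of degree on short exact sequences, i.e.\ $\det\mathcal{B}\cong\det\mathcal{A}\otimes\det\mathcal{C}$; the paper asserts this without proof in the sentence immediately preceding the lemma, so relying on it is consistent with the paper's own conventions. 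Your alternative via the determinant identity $\bigwedge^{rs}(\mathcal{F}\otimes\mathcal{E})\cong(\det\mathcal{F})^{\otimes s}\otimes(\det\mathcal{E})^{\otimes r}$ is also valid and arguably cleaner, at the cost of checking that the local isomorphisms are canonical enough to glue.
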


\begin{Lemma} \label{degranksymext}
Let $\mathcal{F}$ be a locally free sheaf of finite $\rank{\mathcal{F}} \geq 1$ on a smooth projective curve over an algebraically closed field $K$ and $n\in \N_{>0}$.
We have $\rank\Sym^n\mathcal{F} = \binom{n + \rank\mathcal{F} - 1}{n}$ and for the degree we have $\deg\Sym^n\mathcal{F} = \binom{n + \rank\mathcal{F} - 1}{n-1}\deg\mathcal{F}.$

Also $\rank\bigwedge^n\mathcal{F} = \binom{\rank\mathcal{F}}{n}$ and $\deg\bigwedge^n\mathcal{F} = \binom{\rank\mathcal{F} - 1}{n-1}\deg\mathcal{F}.$
\begin{proof}
We want to compute this via induction over $n+\rank \mathcal{F}$. For $n = 1$ the assertions are clear. 
For $\Sym^n$ of a line bundle $\mathcal{L}$ use that $\Sym^n\mathcal{L} = \mathcal{L}^{\otimes n}$. 
So the assertions are also true for every sheaf of rank 1.

We apply Lemma \ref{sheafsequenceinvertible} to the dual of $\mathcal{F}$ and dualize again to get a short exact sequence $0\rightarrow\mathcal{U}\rightarrow \mathcal{F} \rightarrow \mathcal{L} \rightarrow 0$, where $\mathcal{U}$ has one rank less than $\mathcal{F}$ and $\mathcal{L}$ is a line bundle. 
Note that $\deg \mathcal{L} = \deg\mathcal{F}-\deg\mathcal{U}$.

We apply Lemma \ref{powersexactsequences} to the sequence $0\rightarrow\mathcal{U}\rightarrow \mathcal{F} \rightarrow \mathcal{L} \rightarrow 0$. 
Because of $\bigwedge^2\mathcal{L} = 0$ we get the short exact sequence 
$$0\longrightarrow \Sym^n \mathcal{U}\longrightarrow \Sym^n \mathcal{F} \longrightarrow \left (\Sym^{n-1}\mathcal{F}\right)\otimes \mathcal{L}\longrightarrow 0.$$

First we compute the rank of the symmetric powers by induction over $n+\rank{F}$. 
\begin{align*}
\rank \Sym^n\mathcal{F}
&= \rank \Sym^{n}\mathcal{U} + \rank \left (\mathcal{L} \otimes \Sym^{n-1}\mathcal{F}\right ) \\
&= \rank\Sym^{n}\mathcal{U} + \rank\Sym^{n-1}\mathcal{F}\\
&= \binom{n+\rank\mathcal{F}-2}{n} + \binom{n+\rank\mathcal{F}-2}{n-1}\\
&=\binom{n+\rank\mathcal{F}-1}{n}.
\end{align*}

Now we do induction over $n+\rank{F}$ for the degree.
\begin{align*}
\deg \Sym^n\mathcal{F}
&= \deg \Sym^{n}\mathcal{U} + \deg \left (\mathcal{L} \otimes \Sym^{n-1}\mathcal{F}\right ) \\
&= \deg\Sym^{n}\mathcal{U} + \deg \Sym^{n-1}\mathcal{F} + \rank\left(\Sym^{n-1}\mathcal{F}\right)\cdot\deg \mathcal{L}\\
&= \binom{n+\rank\mathcal{F}-2}{n-1}\deg(\mathcal{U}) + \binom{n+\rank\mathcal{F}-2}{n-2}\deg\mathcal{F}\\& + \binom{n+\rank\mathcal{F}-2}{n-1}\cdot (\deg\mathcal{F}-\deg\mathcal{U})\\
&= \binom{n+\rank\mathcal{F}-2}{n-2}\deg\mathcal{F} + \binom{n+\rank(\mathcal{F})-2}{n-1}\cdot\deg\mathcal{F}\\
&=\binom{n+\rank\mathcal{F}-1}{n-1}\deg\mathcal{F}.
\end{align*}

For $\bigwedge^n\mathcal{F}$ we consider the short exact sequence $$0\longrightarrow \bigwedge^n \mathcal{U}\longrightarrow \bigwedge^n \mathcal{F} \longrightarrow \left (\bigwedge^{n-1}\mathcal{U}^{n-1}\right)\otimes \mathcal{L}\longrightarrow 0.$$
The right map is on affine subsets given by concentration of the contributions of $\mathcal{L}$ on the last factor. 
For $n\geq 2$ we do induction over $\rank{F}$.
\begin{align*}
\rank \bigwedge^n\mathcal{F}
&= \rank \bigwedge^{n}\mathcal{U} + \rank \left (\mathcal{L} \otimes \bigwedge^{n-1}\mathcal{U}\right ) \\
&= \rank \bigwedge^{n}\mathcal{U} + \rank \bigwedge^{n-1}\mathcal{U}\\
&= \binom{\rank\mathcal{U}}{n} + \binom{\rank\mathcal{U}}{n-1}\\
&=\binom{\rank\mathcal{F}}{n}.
\end{align*}

And similarly for the degree.
\begin{align*}
\deg \bigwedge^n\mathcal{F}
&= \deg \bigwedge^{n}\mathcal{U} + \deg \left (\mathcal{L} \otimes \bigwedge^{n-1}\mathcal{U}\right ) \\
&= \deg \bigwedge^{n}\mathcal{U} + \deg \bigwedge^{n-1}\mathcal{U} + \rank\left(\bigwedge^{n-1}\mathcal{U}\right)\cdot\deg \mathcal{L}\\
&= \binom{\rank\mathcal{U}-1}{n-1}\deg\mathcal{U} + \binom{\rank(\mathcal{U})-1}{n-2}\deg\mathcal{U}\\
&+ \binom{\rank\mathcal{U}}{n-1}\cdot (\deg\mathcal{F}-\deg\mathcal{U})\\
&= \binom{\rank\mathcal{U}}{n-1}\deg\mathcal{U} + \binom{\rank\mathcal{U}}{n-1}\cdot (\deg\mathcal{F}-\deg\mathcal{U})\\
&=\binom{\rank\mathcal{F}-1}{n-1}\deg\mathcal{F}.
\end{align*}

\end{proof}
\end{Lemma}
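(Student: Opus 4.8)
The plan is to prove all four formulas simultaneously by induction, reducing an arbitrary locally free sheaf to a line bundle extension and feeding the resulting exact sequences of symmetric and exterior powers into the additivity of rank and degree. The essential structural input is Lemma~\ref{sheafsequenceinvertible}: applying it to $\mathcal{F}^\vee$ and dualizing produces a short exact sequence $0\to\mathcal{U}\to\mathcal{F}\to\mathcal{L}\to 0$ in which $\mathcal{L}$ is invertible and $\rank\mathcal{U}=\rank\mathcal{F}-1$, so that $\deg\mathcal{L}=\deg\mathcal{F}-\deg\mathcal{U}$. Since both rank and degree are additive on short exact sequences (the latter because $\det$ sends such a sequence to a tensor product), any exact sequence built from this one lets me read off the rank and degree of its middle term from those of the outer terms.

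First I would dispose of the base cases, writing $r:=\rank\mathcal{F}$. For $n=1$ all four formulas reduce to the evident identities $\rank\mathcal{F}=r=\binom{r}{1}$ and $\deg\mathcal{F}=\binom{r-1}{0}\deg\mathcal{F}$. For a line bundle $\mathcal{L}$ one uses $\Sym^n\mathcal{L}=\mathcal{L}^{\otimes n}$ and $\bigwedge^n\mathcal{L}=0$ for $n\ge 2$, so the formulas hold whenever $r=1$. These cases anchor an induction on the quantity $n+\rank\mathcal{F}$.

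For the inductive step on $\Sym^n$ I would apply Lemma~\ref{powersexactsequences} to $0\to\mathcal{U}\to\mathcal{F}\to\mathcal{L}\to 0$. Because $\mathcal{L}$ is invertible, $\bigwedge^k\mathcal{L}=0$ for $k\ge 2$, so the long exact sequence collapses to
$$0\longrightarrow \Sym^n\mathcal{U}\longrightarrow \Sym^n\mathcal{F}\longrightarrow \left(\Sym^{n-1}\mathcal{F}\right)\otimes\mathcal{L}\longrightarrow 0.$$
Additivity of rank then gives $\rank\Sym^n\mathcal{F}=\rank\Sym^n\mathcal{U}+\rank\Sym^{n-1}\mathcal{F}$, where both right-hand terms have strictly smaller value of $n+\rank$ (one lowers the rank, the other the exponent), so the induction hypothesis applies and the two binomial coefficients recombine by Pascal's rule $\binom{n+r-2}{n}+\binom{n+r-2}{n-1}=\binom{n+r-1}{n}$. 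For the degree I would combine additivity with Lemma~\ref{degranktor} to obtain $\deg\Sym^n\mathcal{F}=\deg\Sym^n\mathcal{U}+\deg\Sym^{n-1}\mathcal{F}+\rank(\Sym^{n-1}\mathcal{F})\cdot\deg\mathcal{L}$, substitute the inductive expressions together with $\deg\mathcal{L}=\deg\mathcal{F}-\deg\mathcal{U}$, and collect terms; the $\deg\mathcal{U}$ contributions cancel, and the remaining $\deg\mathcal{F}$ coefficients combine via Pascal's rule into $\binom{n+r-1}{n-1}$. The exterior case runs through the analogous sequence $0\to\bigwedge^n\mathcal{U}\to\bigwedge^n\mathcal{F}\to(\bigwedge^{n-1}\mathcal{U})\otimes\mathcal{L}\to 0$, where now both outer terms involve only $\mathcal{U}$, so an induction on the rank alone suffices.

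The main obstacle is bookkeeping rather than any single hard idea: in the symmetric case the quotient term still carries the \emph{full} sheaf as $\Sym^{n-1}\mathcal{F}$ (not $\Sym^{n-1}\mathcal{U}$), so the recursion mixes a rank-lowering step with an exponent-lowering step, which is precisely why a combined induction on $n+\rank\mathcal{F}$ is needed to keep both subordinate terms genuinely smaller. Beyond that, the only point requiring care is the telescoping of the $\deg\mathcal{U}$ terms in the degree computation, which is exactly where the cancellation and the final Pascal recombination must be checked.
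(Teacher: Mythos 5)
Your proposal is correct and follows essentially the same route as the paper: the same induction on $n+\rank\mathcal{F}$ anchored at $n=1$ and at line bundles, the same short exact sequence $0\to\mathcal{U}\to\mathcal{F}\to\mathcal{L}\to 0$ obtained by dualizing Lemma~\ref{sheafsequenceinvertible}, the same collapsed sequences from Lemma~\ref{powersexactsequences}, and the same additivity-plus-Pascal computation, including the observation that the symmetric quotient carries $\Sym^{n-1}\mathcal{F}$ rather than $\Sym^{n-1}\mathcal{U}$, which forces the combined induction. No gaps.
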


Note that the rank of $\Sym^n(\mathcal{F})$ of a rank $1$ sheaf stays $1$ as we have used in the proof.

\begin{Corollary}
Let $\mathcal{F}$ and $\mathcal{E}$ be locally free sheaves on $X$ and $s\in \N_{> 0}$. 
We have 
$\mu(\bigwedge^s \mathcal{F}) = \mu(\Sym^s \mathcal{F}) = s\cdot\mu(\mathcal{F})$ and $\mu(\mathcal{F}\otimes\mathcal{E}) = \mu(\mathcal{F})+\mu(\mathcal{E})$.

\begin{proof}
This follows from Lemma \ref{degranktor} and Lemma \ref{degranksymext}.
\end{proof}
\end{Corollary}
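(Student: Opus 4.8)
The plan is to reduce everything to the degree and rank formulas already established in Lemma \ref{degranksymext} and the tensor product formula in Lemma \ref{degranktor}, since slope is by definition just degree divided by rank. First I would treat the symmetric power. By definition $\mu(\Sym^s\mathcal{F}) = \frac{\deg \Sym^s\mathcal{F}}{\rank \Sym^s\mathcal{F}}$, and I would substitute the two expressions from Lemma \ref{degranksymext}, namely $\deg \Sym^s\mathcal{F} = \binom{s+\rank\mathcal{F}-1}{s-1}\deg\mathcal{F}$ and $\rank \Sym^s\mathcal{F} = \binom{s+\rank\mathcal{F}-1}{s}$. The quotient of these two binomial coefficients should collapse to a simple factor times $\mu(\mathcal{F})$.

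The key computation is the binomial ratio $\binom{s+r-1}{s-1}\big/\binom{s+r-1}{s}$ where $r=\rank\mathcal{F}$. Writing both out as factorials, the quotient is $\frac{s!\,(r-1)!}{(s-1)!\,r!} = \frac{s}{r}$. Hence $\mu(\Sym^s\mathcal{F}) = \frac{s}{r}\cdot\frac{\deg\mathcal{F}}{1}$, but since $\deg\mathcal{F}/\rank\mathcal{F} = \mu(\mathcal{F})$, combining gives $\mu(\Sym^s\mathcal{F}) = \frac{s\cdot\deg\mathcal{F}}{r} = s\cdot\mu(\mathcal{F})$. The exterior power is entirely analogous: using $\deg\bigwedge^s\mathcal{F} = \binom{r-1}{s-1}\deg\mathcal{F}$ and $\rank\bigwedge^s\mathcal{F} = \binom{r}{s}$, the ratio $\binom{r-1}{s-1}\big/\binom{r}{s}$ simplifies to $\frac{s}{r}$ in the same way, again yielding $s\cdot\mu(\mathcal{F})$.

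For the tensor product statement I would invoke Lemma \ref{degranktor} directly. Writing $\mu(\mathcal{F}\otimes\mathcal{E}) = \frac{\deg(\mathcal{F}\otimes\mathcal{E})}{\rank(\mathcal{F}\otimes\mathcal{E})}$ and substituting $\deg(\mathcal{F}\otimes\mathcal{E}) = \rank\mathcal{E}\cdot\deg\mathcal{F} + \rank\mathcal{F}\cdot\deg\mathcal{E}$ together with $\rank(\mathcal{F}\otimes\mathcal{E}) = \rank\mathcal{F}\cdot\rank\mathcal{E}$, the fraction splits into $\frac{\deg\mathcal{F}}{\rank\mathcal{F}} + \frac{\deg\mathcal{E}}{\rank\mathcal{E}} = \mu(\mathcal{F}) + \mu(\mathcal{E})$ after cancelling the common rank factors in each term.

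I do not expect any genuine obstacle here, as the statement is a formal consequence of the preceding lemmas; the only thing requiring a moment of care is the binomial coefficient simplification, which must be done cleanly to confirm the ratio is exactly $\frac{s}{r}$ in both the symmetric and exterior cases. One should note that the formulas require $\rank\mathcal{F}\geq 1$ (so that $\mu(\mathcal{F})$ is defined) and $s\in\N_{>0}$, matching the hypotheses carried over from Lemma \ref{degranksymext}.
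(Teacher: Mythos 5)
Your proposal is correct and is exactly the argument the paper intends: the paper's proof is simply the citation of Lemma \ref{degranktor} and Lemma \ref{degranksymext}, and your binomial ratio computations (both simplifying to $\frac{s}{r}$) and the tensor product splitting are the straightforward verifications behind that citation.
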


\begin{Lemma}\label{powersemistable}
Let $\mathcal{F}$ be a locally free sheaf over a smooth projective curve over an algebraically closed field of characteristic $0$. 
Fix $n\in \N_{>0}, k\in \Z$. The following are equivalent:
\begin{enumerate}
\item $\mathcal{F}$ is semistable.
\item $\Sym^n\mathcal{F}$ is semistable.
\item $\mathcal{F}\otimes\mathcal{O}(k)$ is semistable.
\end{enumerate}

Also the following is true: If $\mathcal{F}$ is semistable so is $\bigwedge^n\mathcal{F}$.
\begin{proof}
Let first $\mathcal{F}$ be not semistable. 
Then a destabilizing subsheaf $\mathcal{E}\subset \mathcal{F}$ gives a destabilizing subsheaf $\Sym^n\mathcal{E}\subset \Sym^n \mathcal{F}$ and a destabilizing subsheaf $\mathcal{E}\otimes\mathcal{O}(k)\subset \mathcal{F}\otimes\mathcal{O}(k)$ (as $\mathcal{O}(k)$ is flat).

If however $\mathcal{F}$ is semistable, then it is shown (even for bundles over a normal projective variety in characteristic 0) in \cite[Corollary 3.2.10]{huybrechtslehn} that the symmetric and exterior powers are also semistable.
\end{proof}
\end{Lemma}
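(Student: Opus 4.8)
The plan is to split the three-way equivalence into the easy direction, in which a destabilizing subsheaf of $\mathcal{F}$ propagates to a destabilizing subsheaf of a power or twist, and the hard direction, in which semistability of $\mathcal{F}$ forces semistability of the power or twist; only the latter needs nonformal input. First I would handle the propagation of instability, which yields the reverse implications in the equivalence. Suppose $\mathcal{F}$ is not semistable and choose a destabilizing subsheaf $\mathcal{E}\subset\mathcal{F}$ with $\mu(\mathcal{E}) > \mu(\mathcal{F})$. Applying Lemma \ref{powerskernelsheavescurves} to $0\to\mathcal{E}\to\mathcal{F}$ shows that $\Sym^n$ is left exact on this inclusion, producing a subsheaf inclusion $\Sym^n\mathcal{E}\hookrightarrow\Sym^n\mathcal{F}$. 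By the preceding slope Corollary, $\mu(\Sym^n\mathcal{E}) = n\mu(\mathcal{E}) > n\mu(\mathcal{F}) = \mu(\Sym^n\mathcal{F})$, so $\Sym^n\mathcal{E}$ destabilizes $\Sym^n\mathcal{F}$, giving (2) $\Rightarrow$ (1). The identical argument with $\bigwedge^n$ shows $\bigwedge^n\mathcal{F}$ is then also not semistable, which establishes the converse of the (one-directional) exterior power claim as a bonus.

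For the twist I would observe that (1) $\Leftrightarrow$ (3) is entirely elementary, requiring neither the curve hypothesis nor characteristic $0$. Since $\mathcal{O}(k)$ is invertible, the functor $-\otimes\mathcal{O}(k)$ is an exact autoequivalence, so $\mathcal{E}\mapsto\mathcal{E}\otimes\mathcal{O}(k)$ is an inclusion-preserving bijection between the subsheaves of $\mathcal{F}$ and those of $\mathcal{F}\otimes\mathcal{O}(k)$. The slope Corollary gives $\mu(\mathcal{E}\otimes\mathcal{O}(k)) = \mu(\mathcal{E}) + \mu(\mathcal{O}(k))$, so every slope is shifted by the same constant $\mu(\mathcal{O}(k))$; hence $\mu(\mathcal{E})\le\mu(\mathcal{F})$ holds for all subsheaves if and only if the twisted inequality holds for all subsheaves, which is exactly the desired equivalence.

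The remaining and genuinely hard implication is (1) $\Rightarrow$ (2) together with the forward exterior power statement: that semistability of $\mathcal{F}$ passes to $\Sym^n\mathcal{F}$ and $\bigwedge^n\mathcal{F}$. This is the main obstacle, and it is not formal, because a subsheaf of $\Sym^n\mathcal{F}$ need not be the symmetric power of a subsheaf of $\mathcal{F}$, so no slope bookkeeping as above can produce it. I would therefore not attempt an elementary argument; instead I would invoke the fact, valid for bundles over a normal projective variety in characteristic $0$, that symmetric and exterior powers of a semistable bundle are semistable, as in \cite[Corollary 3.2.10]{huybrechtslehn} (a statement that ultimately rests on the deep theorem that tensor products of semistable bundles are semistable in characteristic $0$). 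Combining this citation with the two elementary directions above closes all three equivalences and the forward exterior power claim.
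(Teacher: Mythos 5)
Your proof of the stated lemma is correct and follows essentially the same route as the paper: the reverse implications come from pushing a destabilizing subsheaf $\mathcal{E}\subset\mathcal{F}$ through $\Sym^n$ and through the twist and comparing slopes, and the forward implication (1) $\Rightarrow$ (2), together with the exterior-power claim, is delegated to \cite[Corollary 3.2.10]{huybrechtslehn}. Your treatment of (1) $\Leftrightarrow$ (3) via the exact autoequivalence $-\otimes\mathcal{O}(k)$ is slightly more explicit than the paper's one-line remark and is a welcome clarification. However, your ``bonus'' claim --- that the identical argument shows instability of $\mathcal{F}$ propagates to $\bigwedge^n\mathcal{F}$, establishing the converse of the exterior-power statement --- is false. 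If the destabilizing subsheaf $\mathcal{E}$ has rank $s<n$, then $\bigwedge^n\mathcal{E}=0$ and the argument produces no subsheaf at all; and the converse genuinely fails, since for $n=r=\rank\mathcal{F}$ the sheaf $\bigwedge^r\mathcal{F}=\det\mathcal{F}$ is a line bundle and hence semistable whether or not $\mathcal{F}$ is. This is precisely why the lemma asserts the exterior-power implication in one direction only. Since that converse is not part of the statement, your proof of the lemma itself is complete, but you should delete the erroneous side remark.
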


\section{Destabilizing subbundles and destabilizing sections}

We want to determine if $\mathcal{F}$ is semistable only by looking at global sections of twists of symmetric powers of $\mathcal{F}$. The following theorem is the main reason why our algorithm works.

\begin{Theorem} \label{maintheoremsym}
Let $\mathcal{F}$ be a locally free sheaf on a smooth projective curve $X$ of genus $g$ over an algebraically closed field of characteristic $0$ and $r:=\rank{\mathcal{F}}$. 
Then $\mathcal{F}$ is semistable if and only if there does not exist a nontrivial global section of $(\Sym^q\mathcal{F})\otimes\mathcal{O}(k)$, where $q = (g-1+\deg X)n+1$, $n=\frac{r(r-1)}{\gcd(r,\deg\mathcal{F})}$ and $k=\left\lceil \frac{-q\mu(\mathcal{F})}{\deg X} \right\rceil - 1$.
\begin{proof}
First assume that $\mathcal{F}$ is not semistable. 
Take a destabilizing subbundle $\mathcal{E}\subseteq \mathcal{F}$ of rank $s < r$.
Then 
$$\frac{\deg\mathcal{E}}{s} = \mu(\mathcal{E}) > \mu(\mathcal{F}) = \frac{\deg\mathcal{F}}{r}$$ 
and thus 
$$\mu(\mathcal{E})-\mu(\mathcal{F}) = \left(\frac{r\cdot\deg \mathcal{E}-s\cdot\deg \mathcal{F}}{r\cdot s}\right)\geq \frac{\gcd(r,\deg \mathcal{F})}{r(r-1)} = \frac{1}{n} > 0.$$ 

\begin{tikzpicture}[line width=0.4mm]
\path[->] (-2,0) edge (8,0);
\coordinate[label=-90:$0$] (O) at (0,0);
\path[-] (O) edge (0,0.2);
\coordinate[label=-90:$g-1$] (g) at (5,0);
\path[-] (g) edge ($(g)+(0,0.2)$);
\coordinate[label=90:$\mu(\mathcal{F})$] (F) at (1,0);
\path[-] ($(F)-(0,0.2)$) edge ($(F)+(0,0.2)$);
\coordinate[label=90:$\mu(\mathcal{E})$] (E) at (2,0);
\path[-] ($(E)-(0,0.2)$) edge ($(E)+(0,0.2)$);
\coordinate[label=-90:$\geq \frac{1}{n}$] (A) at (1.5,0);
\path[{<[scale=1.5]}-{>[scale=1.5]}, red] (1,0) edge (2,0);
\end{tikzpicture}

\begin{tikzpicture}[line width=0.4mm]
\path[->] (-2,0) edge (8,0);
\coordinate[label=-90:$0$] (O) at (0,0);
\path[-] (O) edge (0,0.2);
\coordinate[label=-90:$g-1$] (g) at (5,0);
\path[-] (g) edge ($(g)+(0,0.2)$);
\coordinate[label=90:$\mu(\Sym^q\mathcal{F})$] (F) at (2,0);
\path[-] ($(F)-(0,0.2)$) edge ($(F)+(0,0.2)$);
\coordinate[label=90:$\mu(\Sym^q\mathcal{E})$] (E) at (7.5,0);
\path[-] ($(E)-(0,0.2)$) edge ($(E)+(0,0.2)$);
\path[{<[scale=1.5]}-{>[scale=1.5]}, red] (2,0) edge (7.5,0);
\end{tikzpicture}

\begin{tikzpicture}[line width=0.4mm]
\path[->] (-2,0) edge (8,0);
\coordinate[label=-90:$0$] (O) at (0,0);
\path[-] (O) edge (0,0.2);
\coordinate[label=-90:$g-1$] (g) at (5,0);
\path[-] (g) edge ($(g)+(0,0.2)$);
\coordinate[label=90:$\mu(\Sym^q\mathcal{F}(k))$] (F) at (-0.25,0);
\path[-] ($(F)-(0,0.2)$) edge ($(F)+(0,0.2)$);
\coordinate[label=90:$\mu(\Sym^q\mathcal{E}(k))$] (E) at (5.25,0);
\path[-] ($(E)-(0,0.2)$) edge ($(E)+(0,0.2)$);
\path[{<[scale=1.5]}-{>[scale=1.5]}, red] (-0.25,0) edge (5.25,0);
\end{tikzpicture}

With this we calculate 
\begin{align*}  \mu\left(\left(\Sym^q\mathcal{E}\right)\otimes\mathcal{O}(k)\right)
& = q\mu(\mathcal{E})+k\deg X \\
& = q\mu(\mathcal{E})+\left(\left\lceil \frac{-q\mu(\mathcal{F})}{\deg X} \right\rceil - 1\right)\deg X\\
& \geq q(\mu(\mathcal{E})-\mu(\mathcal{F})) - \deg X\\
& \geq \frac{q}{n}-\deg X\\
& = \frac{(g-1+\deg X)n+1}{n}-\deg X\\
& = g-1+\frac{1}{n}\\
& > g-1.
\end{align*}
Thus $(\Sym^q\mathcal{E})\otimes\mathcal{O}(k)$ has a global section by Lemma \ref{globalsectionroch} and this is also a global section of $(\Sym^q\mathcal{F})\otimes\mathcal{O}(k)$.

Now let us assume that $\mathcal{F}$ is semistable. 
Then so is $(\Sym^q\mathcal{F})\otimes\mathcal{O}(k)$ by Lemma \ref{powersemistable}. 
We calculate its slope \begin{align*}\mu\left((\Sym^q\mathcal{F})\otimes\mathcal{O}(k)\right) &= q\mu(\mathcal{F})+k\deg X\\
&= q\mu(\mathcal{F})+\left(\left\lceil \frac{-q\mu(\mathcal{F})}{\deg X} \right\rceil - 1\right)\deg X\\
&< q\mu(\mathcal{F}) + \frac{-q\mu(\mathcal{F})}{\deg X}\deg X\\
&= 0.
\end{align*}
Being semistable with negative slope it can't have any nontrivial global sections by Lemma \ref{noglobalnegativesemistable}, so we are done.
\end{proof}
\end{Theorem}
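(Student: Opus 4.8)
The plan is to prove the two implications of the equivalence separately, in each case converting a statement about slopes into a statement about global sections via the Riemann--Roch estimates of Lemma \ref{globalsectionroch} and Lemma \ref{noglobalnegativesemistable}. Throughout I would use the slope identities $\mu(\Sym^q\mathcal{E}) = q\,\mu(\mathcal{E})$ and $\mu(\mathcal{E}\otimes\mathcal{O}(k)) = \mu(\mathcal{E}) + k\deg X$ recorded above (consequences of Lemma \ref{degranktor} and Lemma \ref{degranksymext}), together with a single quantitative gap estimate for slopes of subbundles. The delicate point is that one and the same twist $\mathcal{O}(k)$ must separate the two cases, so the choices of $q$ and $k$ have to be calibrated against the genus threshold $g-1$ and against $0$ simultaneously.

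For the forward direction I would assume $\mathcal{F}$ is not semistable and pick a destabilizing subbundle $\mathcal{E}\subseteq\mathcal{F}$ of rank $s<r$, so that $\mu(\mathcal{E})>\mu(\mathcal{F})$. The key step is a uniform lower bound on the gap: writing $\mu(\mathcal{E})-\mu(\mathcal{F}) = \frac{r\deg\mathcal{E}-s\deg\mathcal{F}}{rs}$, the numerator is a positive integer divisible by $\gcd(r,\deg\mathcal{F})$ (since both $r$ and $\deg\mathcal{F}$ are), hence is at least $\gcd(r,\deg\mathcal{F})$; combined with $rs\leq r(r-1)$ this gives $\mu(\mathcal{E})-\mu(\mathcal{F})\geq \frac{1}{n}$. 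I would then compute $\mu\bigl((\Sym^q\mathcal{E})\otimes\mathcal{O}(k)\bigr)=q\mu(\mathcal{E})+k\deg X$, substitute the stated $k$, and use the gap bound together with $q=(g-1+\deg X)n+1$ to show this slope exceeds $g-1$. By Lemma \ref{globalsectionroch} the sheaf $(\Sym^q\mathcal{E})\otimes\mathcal{O}(k)$ then has a nontrivial global section, which is also a global section of $(\Sym^q\mathcal{F})\otimes\mathcal{O}(k)$ because $\Sym^q\mathcal{E}$ is a subsheaf of $\Sym^q\mathcal{F}$ and twisting by the line bundle $\mathcal{O}(k)$ preserves this inclusion.

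For the converse I would assume $\mathcal{F}$ semistable, so that $(\Sym^q\mathcal{F})\otimes\mathcal{O}(k)$ is semistable by Lemma \ref{powersemistable}. Computing its slope as $q\mu(\mathcal{F})+k\deg X$ and using $k<-q\mu(\mathcal{F})/\deg X$ (which holds since $k$ is a ceiling minus one) shows the slope is strictly negative, so by Lemma \ref{noglobalnegativesemistable} it admits no nontrivial global section. I expect the main obstacle to be arranging that a single $k$ serves both directions: the argument only closes because the slope gap is bounded below by $1/n$ and $q$ is taken large enough (linear in $n$) that multiplying the gap by $q$ overcomes both the loss of $\deg X$ from the rounding in $k$ and the threshold $g-1$, while that same $k$ still pulls the slope of $(\Sym^q\mathcal{F})\otimes\mathcal{O}(k)$ below zero. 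Verifying that the stated $q$ and $k$ meet these two competing constraints at once is the crux; the remaining steps are routine slope arithmetic.
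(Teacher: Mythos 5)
Your proposal is correct and follows essentially the same route as the paper's proof: the same lower bound $\mu(\mathcal{E})-\mu(\mathcal{F})\geq \frac{1}{n}$ via integrality of $r\deg\mathcal{E}-s\deg\mathcal{F}$, the same slope computations pushing $(\Sym^q\mathcal{E})\otimes\mathcal{O}(k)$ above $g-1$ and $(\Sym^q\mathcal{F})\otimes\mathcal{O}(k)$ below $0$, and the same appeals to Lemma \ref{globalsectionroch}, Lemma \ref{noglobalnegativesemistable} and Lemma \ref{powersemistable}. Your explicit justification of the divisibility step and of the calibration of $k$ against both thresholds only fills in details the paper leaves implicit.
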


We will also prove the following variant of this theorem, which allows for a smaller $q$ (by at least a factor $r-1$), but requires to compute the global sections of several exterior powers. 
In general it is unclear which of these is easier to compute. 
While Theorem \ref{maintheoremsym} seems simpler, there are cases where a (parallelized) implementation of Theorem \ref{maintheorem0} is much faster and easier (see Example \ref{ExteriorGoodExample} below). 
Note also that for rank $r = 2$ the theorems are identical.

\begin{Theorem} \label{maintheorem0}
Let $\mathcal{F}$ be a locally free sheaf on a smooth projective curve $X$ of genus $g$ over an algebraically closed field of characteristic $0$ and $r:=\rank{\mathcal{F}}$. Then $\mathcal{F}$ is semistable if and only if for every $s<r$ there does not exist a nontrivial global section of $\Sym^q(\bigwedge^s\mathcal{F})\otimes\mathcal{O}(k)$, where $q = (g-1+\deg X)n+1$, $n=\frac{r}{\gcd(r,s\cdot\deg\mathcal{F})}$ and $k=\left\lceil \frac{-qs\mu(\mathcal{F})}{\deg X} \right\rceil - 1$
\begin{proof}
First assume that $\mathcal{F}$ is not semistable. 
Take a destabilizing subbundle $\mathcal{E}\subseteq \mathcal{F}$ of rank $s < r$.
By taking the $s$-th exterior power we get $\bigwedge^s \mathcal{E} \subseteq \bigwedge^s \mathcal{F},$ where $\bigwedge^s \mathcal{E}$ is the determinant bundle of $\mathcal{E}$ and is invertible.
Then 
$$\frac{\deg\mathcal{E}}{s} = \mu(\mathcal{E}) > \mu(\mathcal{F}) = \frac{\deg\mathcal{F}}{r}$$ 
and thus 
$$s(\mu(\mathcal{E})-\mu(\mathcal{F})) = \left(\deg \mathcal{E}-\frac{s\deg \mathcal{F}}{r}\right)\geq \frac{\gcd(r,s\deg\mathcal{F})}{r} = \frac{1}{n} > 0.$$ 
With this we calculate
\begin{align*} \mu\left(\Sym^q\left(\bigwedge^s\mathcal{E}\right)\otimes\mathcal{O}(k)\right)
& = qs\mu(\mathcal{E})+k\deg X \\
& = qs\mu(\mathcal{E})+\left(\left\lceil \frac{-qs\mu(\mathcal{F})}{\deg X} \right\rceil - 1\right)\deg X\\
& \geq qs(\mu(\mathcal{E})-\mu(\mathcal{F})) - \deg X\\
& \geq \frac{q}{n}-\deg X\\
& = \frac{(g-1+\deg X)n+1}{n}-\deg X\\
& = g-1+\frac{1}{n}\\
& > g-1.
\end{align*}
Thus by Lemma \ref{globalsectionroch} the invertible sheaf $\Sym^q(\bigwedge^s\mathcal{E})\otimes\mathcal{O}(k)$ has a global section and this is also a global section of $\Sym^q(\bigwedge^s\mathcal{F})\otimes\mathcal{O}(k)$.

Now let us assume $\mathcal{F}$ is semistable. Then so is $\Sym^q(\bigwedge^s\mathcal{F} )\otimes\mathcal{O}(k)$ by Lemma \ref{powersemistable}. We calculate its slope \begin{align*}\mu\left(\Sym^q\left(\bigwedge^s\mathcal{F}\right)\otimes\mathcal{O}(k)\right) &= qs\mu(\mathcal{F})+k\deg X\\
&= qs\mu(\mathcal{F})+\left(\left\lceil \frac{-qs\mu(\mathcal{F})}{\deg X}  \right\rceil - 1\right)\deg X\\
&< qs\mu(\mathcal{F}) + \left(\frac{-qs\mu(\mathcal{F})}{\deg X}\right)\deg X \\
&= 0.
\end{align*}
Being semistable with negative slope it can not have any nontrivial global sections, so we are done.
\end{proof}
\end{Theorem}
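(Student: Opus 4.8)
The plan is to mirror the structure of the proof of Theorem \ref{maintheoremsym}, exploiting the single new ingredient that distinguishes this variant: passing to the $s$-th exterior power turns a rank-$s$ destabilizing subbundle into an \emph{invertible} determinant bundle, which both streamlines the slope bookkeeping and removes a factor of $r-1$ from the required exponent. As before I would establish the biconditional by proving each implication via its contrapositive.

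For the forward direction, suppose $\mathcal{F}$ is not semistable and fix a destabilizing subbundle $\mathcal{E}\subseteq\mathcal{F}$ of rank $s<r$, so $\mu(\mathcal{E})>\mu(\mathcal{F})$. Taking $\bigwedge^s$ yields a line subbundle $\bigwedge^s\mathcal{E}=\det\mathcal{E}\subseteq\bigwedge^s\mathcal{F}$ with $\deg(\bigwedge^s\mathcal{E})=\deg\mathcal{E}$. The first key step is an integrality bound: since $\deg\mathcal{E}\in\Z$, the positive rational $s(\mu(\mathcal{E})-\mu(\mathcal{F}))=\deg\mathcal{E}-\tfrac{s\deg\mathcal{F}}{r}=\tfrac{r\deg\mathcal{E}-s\deg\mathcal{F}}{r}$ has, after reduction, a denominator dividing $\tfrac{r}{\gcd(r,s\deg\mathcal{F})}=n$ (because $\gcd(r\deg\mathcal{E}-s\deg\mathcal{F},r)=\gcd(s\deg\mathcal{F},r)$), so it is at least $\tfrac{1}{n}$. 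This is exactly where working with $\bigwedge^s\mathcal{E}$ rather than $\mathcal{E}$ pays off, avoiding the extra $r-1$ appearing in Theorem \ref{maintheoremsym}.

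The second step translates this gap into a slope estimate for the line bundle $\mathcal{L}:=\Sym^q(\bigwedge^s\mathcal{E})\otimes\mathcal{O}(k)$. As $\bigwedge^s\mathcal{E}$ is invertible, $\Sym^q$ of it is just its $q$-th tensor power (as used in Lemma \ref{degranksymext}), so $\mu(\mathcal{L})=qs\mu(\mathcal{E})+k\deg X$. Substituting the prescribed $k$, bounding $\lceil -qs\mu(\mathcal{F})/\deg X\rceil$ below by its argument, and applying the gap bound together with $q=(g-1+\deg X)n+1$, a short computation gives $\mu(\mathcal{L})\geq\tfrac{q}{n}-\deg X=g-1+\tfrac{1}{n}>g-1$. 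By Lemma \ref{globalsectionroch} the invertible sheaf $\mathcal{L}$ then has a nontrivial global section, and via the inclusion $\bigwedge^s\mathcal{E}\hookrightarrow\bigwedge^s\mathcal{F}$ (applied after $\Sym^q$ and the flat twist by $\mathcal{O}(k)$) this yields a nontrivial global section of $\Sym^q(\bigwedge^s\mathcal{F})\otimes\mathcal{O}(k)$. Conversely, if $\mathcal{F}$ is semistable, then Lemma \ref{powersemistable} propagates semistability through $\bigwedge^s$, $\Sym^q$ and the twist, so $\Sym^q(\bigwedge^s\mathcal{F})\otimes\mathcal{O}(k)$ is semistable of slope $qs\mu(\mathcal{F})+k\deg X$; using the \emph{upper} estimate $k<-qs\mu(\mathcal{F})/\deg X$ forced by the ceiling shows this slope is strictly negative, so Lemma \ref{noglobalnegativesemistable} forbids nontrivial global sections, closing the equivalence.

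The main obstacle is arithmetic rather than conceptual: one must choose $q$ and $k$ so that the single additive term $k\deg X$ simultaneously keeps the slope above $g-1$ in the destabilizing case (where the gap $\mu(\mathcal{E})-\mu(\mathcal{F})$ is at least $\tfrac{1}{sn}$) and strictly below $0$ in the semistable case. The ceiling prescription pins $k\deg X$ into a band of width $\deg X$ just below $-qs\mu(\mathcal{F})$, and the value $q=(g-1+\deg X)n+1$ is calibrated precisely so that this band width is absorbed by the surplus $g-1+\tfrac{1}{n}$ on one side while leaving the slope negative on the other.
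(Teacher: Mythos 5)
Your proposal is correct and follows essentially the same route as the paper's own proof: pass to the determinant line bundle $\bigwedge^s\mathcal{E}\subseteq\bigwedge^s\mathcal{F}$, use the integrality bound $s(\mu(\mathcal{E})-\mu(\mathcal{F}))\geq\frac{1}{n}$ to push the slope of $\Sym^q(\bigwedge^s\mathcal{E})\otimes\mathcal{O}(k)$ above $g-1$ (Lemma \ref{globalsectionroch}), and in the semistable case combine Lemma \ref{powersemistable} with the strictly negative slope forced by the ceiling choice of $k$. Your explicit justification of the $\frac{1}{n}$ bound via $\gcd(r\deg\mathcal{E}-s\deg\mathcal{F},r)=\gcd(s\deg\mathcal{F},r)$ is a slightly more detailed version of what the paper asserts directly, but the argument is the same.
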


\section{Syzygy sheaves}

We want to describe the algorithm deciding semistability for kernel (or syzygy) sheaves. Let $\mathcal{F}$ be a \emph{kernel sheaf} over a smooth projective curve $X=\Proj S$, where $S$ is a normal 2-dimensional standard graded domain, given by
$$ 0 \longrightarrow \mathcal{F} \longrightarrow \bigoplus_{i=1}^n\mathcal{O}_X(-e_i)  \overset{A}{\longrightarrow} \bigoplus_{j=1}^m\mathcal{O}_X(-d_j).$$

As $\mathcal{F}$ is a subsheaf of a free sheaf it is torsion free and thus, because $X$ is a smooth curve, already locally free. Mainly we deal with syzygy sheaves for ideals, i.e. the case where $m=1$ and $A$ is a single row matrix  $A=(f_1,\ldots, f_n)$. Then we denote the kernel as $\Syz(f_1,\ldots,f_n)$.

\begin{Remark}\label{norestrictionremark}
All vector bundles on a smooth projective curve $X=\Proj S$ are isomorphic to bundles described in this way, at least after twisting and change of coordinate ring. Let $\mathcal{F}$ be of rank $r$ and $\mathcal{O}_X (1) $ be given. Then there exists $\ell$ such that $ \det (\mathcal{F}\otimes \mathcal{O}_X(l) )$ is very ample. We work with $\mathcal{G}=\mathcal{F}\otimes \mathcal{O}_X(l)  $ instead and use its determinant as the new $\mathcal{O}_X(1)$. We have  a presentation (see \cite[Lemma 2.3]{brebounds})
$\mathcal{O}_X^{r+1}\rightarrow\mathcal{G}^{\vee} (m) \rightarrow 0$ for $m$ large enough, the kernel is a line bundle, which is some $ \mathcal{O}_X(d) $ by the determinant property. Dualizing back and twisting gives a syzygy representation for $\mathcal{G}$.
\end{Remark}

\begin{Remark}
For our method it is important to determine the rank and the degree of $\mathcal{F}$ in an exact sequence $0\rightarrow \mathcal{F} \rightarrow \bigoplus_{i=1}^n \mathcal{O}_X(-e_i) \overset{A}{\rightarrow} \mathcal{O}_X.$ 
Let $X=\Proj S, S=K[x_1,\ldots,x_n]/I$ normal and $J\subseteq S$ the ideal generated by the entries of $A\neq 0$. 
Because of the additivity the rank we have $\rank \mathcal{F} = n-1$. The degree can be harder to compute:

Let $\mathcal{L} = \im A\subseteq  \mathcal{O}_X$.
The degree of $\bigoplus_{i=1}^n\mathcal{O}_X(-e_i)$ is $ -\deg X\cdot\sum_{i=1}^n e_i$.
Because of the additivity of degrees we have $\deg{\mathcal{F}} = -\deg X \cdot\sum_{i=1}^n e_i -\deg\mathcal{L}$. 
If $A$ is surjective (which is true if and only if $J$ is $S_+$-primary), computing the degree is easy, as then $\deg\mathcal{L} = \deg\mathcal{O}_X = 0$.

Otherwise we compute $\deg\mathcal{L}$ with the Hilbert polynomial as follows. 
Observe that for large $n\in \N$ we have
\begin{align*}\dim H^0(X,\mathcal{L}\otimes\mathcal{O}_X(n)) &= \chi(\mathcal{L}\otimes\mathcal{O}_X(n)) \\
&= \deg(\mathcal{L}\otimes\mathcal{O}_X(n)) + 1-g \\
&= \deg(\mathcal{L}) + n\cdot \deg X + 1-g.
\end{align*} 
Now $\dim H^0(X,\mathcal{L}\otimes\mathcal{O}_X(n))$ is exactly the number of degree $n$ elements of $J$, i.e. the Hilbert polynomial of the module $J$ with indeterminate $n$: $\Hilb(J)$. 
We have 
\begin{align*}\Hilb(J) &= \Hilb(S) -\Hilb(S/J)\\
&= n\cdot\deg X +(1-g) - \Hilb(S/J).
\end{align*}
Thus we deduce $\deg \mathcal{L} = -\Hilb(S/J) = - \dim_K(S/J) $ and 
$$\deg{\mathcal{F}} = -\deg X \cdot\sum_{i=1}^n e_i  +\Hilb(S/J).$$
\end{Remark}

\begin{Lemma}\label{explicitpowers}
Let $X$ be a scheme, $q,n,m \in \N_{>0}$ $e_1,\ldots,e_n,d_1,\ldots,d_m\in \Z$. 
We have
$$\left(\bigoplus_{i=1}^n \mathcal{O}_X(-e_i)   \right)     \otimes \left( \bigoplus_{j=1}^m\mathcal{O}_X(-d_j) \right) = \bigoplus_{i=1}^n\bigoplus_{j=1}^m\mathcal{O}_X(-d_j-e_i).$$
$$\Sym^q\left(\bigoplus_{i=1}^n\mathcal{O}_X(-e_i)\right) = \bigoplus_{(a_1,\ldots, a_n)\in \N^n, \sum_{i=1}^{n}a_i = q}\mathcal{O}_X\left(-\sum_{i=1}^{n}a_i\cdot e_i\right),$$
$$\bigwedge^q\left(\bigoplus_{i=1}^n\mathcal{O}_X(-e_i)\right) = \bigoplus_{I\subseteq \{1,\ldots,n\}, \#I = q}\mathcal{O}_X\left(-\sum_{i\in I}e_i\right).$$
\begin{proof}
These are special cases of the multilinear behavior of direct sums.
\end{proof}
\end{Lemma}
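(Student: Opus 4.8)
The plan is to derive all three formulas from two elementary building blocks: that $\otimes$ commutes with direct sums, and that the tensor product of line bundles adds twists, i.e. $\mathcal{O}_X(-e)\otimes\mathcal{O}_X(-d)\cong\mathcal{O}_X(-e-d)$. The first identity is then immediate: distributing the tensor product over both direct sums gives $\bigoplus_{i=1}^n\bigoplus_{j=1}^m\mathcal{O}_X(-e_i)\otimes\mathcal{O}_X(-d_j)$, and each summand simplifies to $\mathcal{O}_X(-e_i-d_j)$ by the additivity of twists.

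For the symmetric power I would start from the standard decomposition of a symmetric power of a direct sum,
$$\Sym^q\left(\bigoplus_{i=1}^n\mathcal{M}_i\right)\cong\bigoplus_{\substack{a_1,\ldots,a_n\in\N\\ a_1+\cdots+a_n=q}}\Sym^{a_1}\mathcal{M}_1\otimes\cdots\otimes\Sym^{a_n}\mathcal{M}_n,$$
which holds for the sheaf versions because these powers are sheafifications of the corresponding module constructions and the decomposition is natural (one may also just check it on an affine cover, where it is the classical fact for free modules, see \cite{schejastorch}). Specializing $\mathcal{M}_i=\mathcal{O}_X(-e_i)$ and using $\Sym^{a}\mathcal{L}\cong\mathcal{L}^{\otimes a}$ for a line bundle $\mathcal{L}$, each factor becomes $\mathcal{O}_X(-a_ie_i)$, and the first identity collapses the tensor product of the factors to $\mathcal{O}_X(-\sum_i a_ie_i)$. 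This yields exactly the claimed sum over tuples $(a_1,\ldots,a_n)$ with $\sum_i a_i=q$.

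The exterior power is the only case where something is actually discarded, so I would treat it as the main point. Beginning from the analogous decomposition
$$\bigwedge^q\left(\bigoplus_{i=1}^n\mathcal{M}_i\right)\cong\bigoplus_{\substack{b_1,\ldots,b_n\in\N\\ b_1+\cdots+b_n=q}}\bigwedge^{b_1}\mathcal{M}_1\otimes\cdots\otimes\bigwedge^{b_n}\mathcal{M}_n,$$
I would use that for a line bundle $\mathcal{L}$ one has $\bigwedge^0\mathcal{L}\cong\mathcal{O}_X$, $\bigwedge^1\mathcal{L}\cong\mathcal{L}$, and $\bigwedge^{b}\mathcal{L}=0$ for $b\geq 2$. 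Hence every summand with some $b_i\geq 2$ vanishes, and the surviving indices are precisely the $0$--$1$ tuples, which correspond bijectively to subsets $I\subseteq\{1,\ldots,n\}$ with $\#I=q$ via $I=\{i:b_i=1\}$. For such a subset the nonzero factors multiply to $\bigotimes_{i\in I}\mathcal{O}_X(-e_i)\cong\mathcal{O}_X(-\sum_{i\in I}e_i)$, giving the stated formula.

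I expect no serious obstacle here; the only points requiring care are the bookkeeping in the multilinear decompositions and, slightly more conceptually, the passage from modules to sheaves. That passage is harmless since $\Sym^q$, $\bigwedge^q$ and $\otimes$ of sheaves are defined by sheafifying the module-level functors, so the natural isomorphisms above sheafify directly; alternatively all three identities can be verified stalkwise, where they reduce to the classical statements for free modules over a local ring.
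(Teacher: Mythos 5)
Your proof is correct and is essentially the paper's argument: the paper disposes of the lemma by citing ``the multilinear behavior of direct sums,'' which is exactly the decomposition of $\otimes$, $\Sym^q$ and $\bigwedge^q$ of a direct sum into tensor products of the pieces that you spell out, combined with $\Sym^a\mathcal{L}\cong\mathcal{L}^{\otimes a}$ and $\bigwedge^b\mathcal{L}=0$ for $b\geq 2$ on line bundles. Your version just makes the bookkeeping and the reduction to the affine/free-module case explicit.
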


\begin{Lemma}\label{explicitsymextmatrices}
Let $\mathcal{F} = \ker A$ be a kernel sheaf over a smooth projective curve $X=\Proj S$, where $A: \bigoplus_{i=1}^n\mathcal{O}_X(-e_i) \rightarrow \bigoplus_{j=1}^m\mathcal{O}_X(-d_j)$ sits in an exact sequence as in Lemma \ref{powerskernelsheavescurves}. 
Then $\Sym^q\mathcal{F} = \ker A_{q}$, where 
\begin{multline*}
A_{q}: \Sym^q\left(\bigoplus_{i=1}^n\mathcal{O}_X(-e_i)\right) = \bigoplus_{a\in I}\mathcal{O}_X(-a\cdot e) \longrightarrow \\ \left(\Sym^{q-1}\left(\bigoplus_{i=1}^n\mathcal{O}_X(-e_i)\right)\right)\otimes\bigoplus_{j=1}^m\mathcal{O}_X(-d_j) = \bigoplus_{(b,j)\in J}\mathcal{O}_X(-b\cdot e-d_j)
\end{multline*}
is given in the following way. We index the columns of $A_{q}$ by the set $I=\{a=(a_1,\ldots, a_n)\in \N^n : \sum_{i=1}^{n}a_i = q\}$ and the rows by the set $J=\{(b,j)=(b_1,\ldots, b_n,j)\in \N^{n+1} : \sum_{i=1}^{n}b_i = q-1, 1 \leq j\leq m\}$. 
The entries are 
$$A_{q,(b_1,\ldots, b_n,j),(a_1,\ldots, a_n)} = \begin{cases}
0, & \textrm{if } \exists i\in \{1,\ldots,n\} : b_i>a_i \\
a_{i^*}\cdot A_{j,i^*}, & \textrm{otherwise; $i^*$ unique s.t. } a_{i^*}>b_{i^*}.\end{cases}$$
Also, $\bigwedge^q\mathcal{F} = \ker A_{\bigwedge^q}$, where 
$$A_{\bigwedge^q}: \bigwedge^q\left(\bigoplus_{i=1}^n\mathcal{O}_X(-e_i)\right) \longrightarrow \left(\bigwedge^{q-1}\left(\bigoplus_{i=1}^n\mathcal{O}_X(-e_i)\right)\right)\otimes\bigoplus_{j=1}^m\mathcal{O}_X(-d_j).$$
We index the columns of $A_{\bigwedge^q}$ by the subsets $I\subseteq \{1,\ldots,n\}, \#I = q$ and the rows by the set of tuples $(J,j)$, where $J\subseteq \{1,\ldots,n\},\#J = q-1$ and $1 \leq j\leq m$. 
The entries are
$$ A_{\bigwedge^q,(J,j),I} = \begin{cases}
0, & \textrm{if } J\not\subset I \\
\sign(i^*,I)\cdot A_{j,i^*}, & \textrm{otherwise; $i^*$ unique s.t. } i^*\in I\setminus J.\end{cases}$$
Here $\sign(i^*,I)=(-1)^{\pos(i^*)},$ where $\pos(i^*)$ gives the position of $i^*$ in $I$, induced by the order of $\{1,\ldots,n\}$.
\begin{proof}
The matrices $A_{q}$ and $A_{\bigwedge^q}$ are the explicit descriptions of the maps in Lemma \ref{powerskernelsheavescurves} when the symmetric and exterior powers of direct sums of invertible sheaves are expressed as in Lemma \ref{explicitpowers}.
\end{proof}
\end{Lemma}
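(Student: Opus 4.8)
The plan is to read off both matrices directly from the kernel descriptions already supplied by Lemma \ref{powerskernelsheavescurves}, after decomposing the symmetric and exterior powers of $\bigoplus_{i=1}^n\mathcal{O}_X(-e_i)$ into line bundles by Lemma \ref{explicitpowers}. By Lemma \ref{powerskernelsheavescurves} the sheaf $\Sym^q\mathcal{F}$ is the kernel of the map $\Sym^q(\bigoplus_i\mathcal{O}_X(-e_i))\to(\Sym^{q-1}(\bigoplus_i\mathcal{O}_X(-e_i)))\otimes(\bigoplus_j\mathcal{O}_X(-d_j))$ sending $f_1\cdots f_q\mapsto\sum_\ell f_1\cdots\widehat{f_\ell}\cdots f_q\otimes\varphi(f_\ell)$, and likewise for $\bigwedge^q\mathcal{F}$ with signs. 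So nothing geometric remains; it suffices to express these maps in the monomial bases coming from Lemma \ref{explicitpowers}.

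First I would fix bases. Let $v_1,\dots,v_n$ be the standard generators of $\bigoplus_i\mathcal{O}_X(-e_i)$ and $w_1,\dots,w_m$ those of $\bigoplus_j\mathcal{O}_X(-d_j)$, so that $\varphi(v_i)=\sum_j A_{j,i}w_j$. Then $\Sym^q$ carries the monomial basis $v^a=v_1^{a_1}\cdots v_n^{a_n}$ with $\sum_i a_i=q$ (this is the index set $I$ of the statement), and $(\Sym^{q-1})\otimes(\bigoplus_j\mathcal{O}_X(-d_j))$ the basis $v^b\otimes w_j$ with $\sum_i b_i=q-1$ (the index set $J$). Applying the map to $v^a$ and grouping the $q$ factors according to which generator is deleted, I get $v^a\mapsto\sum_{i:\,a_i>0}a_i\,v^{a-\epsilon_i}\otimes\varphi(v_i)=\sum_{i,j}a_iA_{j,i}\,v^{a-\epsilon_i}\otimes w_j$, where $\epsilon_i$ is the $i$-th unit vector and the multiplicity $a_i$ appears because $v^a$ contains $a_i$ interchangeable copies of $v_i$. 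Reading off the coefficient of $v^b\otimes w_j$ gives exactly the stated case distinction: the entry is nonzero only if $b=a-\epsilon_{i^*}$, i.e. $b\leq a$ componentwise, in which case $i^*$ is the unique index with $a_{i^*}>b_{i^*}$ and the entry is $a_{i^*}A_{j,i^*}$; otherwise some $b_i>a_i$ and the coefficient vanishes.

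The exterior case is identical, with multiplicities replaced by signs. Writing $v_I=v_{i_1}\wedge\cdots\wedge v_{i_q}$ for $i_1<\cdots<i_q$, the map sends $v_I\mapsto\sum_\ell(-1)^{\ell-1}\,v_{I\setminus\{i_\ell\}}\otimes\varphi(v_{i_\ell})$, and since $i_\ell$ occupies position $\ell$ of $I$, the sign $(-1)^{\ell-1}$ is precisely $\sign(i^*,I)$ for $i^*=i_\ell$ under the position convention of the statement. Expanding $\varphi(v_{i_\ell})$ and collecting the coefficient of $v_J\otimes w_j$ yields the second case distinction: the entry vanishes unless $J\subset I$ with $\#(I\setminus J)=1$, and for $I\setminus J=\{i^*\}$ it equals $\sign(i^*,I)A_{j,i^*}$.

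The only genuinely delicate points are combinatorial rather than geometric. In the symmetric case one must correctly account for the multiplicity $a_{i^*}$ produced by the repeated factors of the monomial, and in the exterior case one must match the deletion sign $(-1)^{\ell-1}$ against the positional sign $\sign(i^*,I)$; I would verify the latter against the explicit definition $\sign(i^*,I)=(-1)^{\pos(i^*)}$ to pin down the indexing. Both are immediate once the bases are chosen, and the grading is respected automatically, since $v^a$ lives in $\mathcal{O}_X(-a\cdot e)$ and $v_I$ in $\mathcal{O}_X(-\sum_{i\in I}e_i)$, matching the decompositions in Lemma \ref{explicitpowers}; the characteristic $0$ hypothesis is needed only upstream, to guarantee the exactness in Lemma \ref{powerskernelsheavescurves}.
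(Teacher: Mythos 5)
Your proposal is correct and follows exactly the route of the paper's (one-line) proof: combine the kernel description of Lemma \ref{powerskernelsheavescurves} with the line-bundle decomposition of Lemma \ref{explicitpowers} and read off the matrix entries in the monomial bases; you simply carry out the computation that the paper leaves implicit, including the multiplicity $a_{i^*}$ and the deletion sign. The only point you rightly flag --- whether $\pos$ is counted from $0$ or $1$ so that $(-1)^{\pos(i^*)}$ matches the deletion sign $(-1)^{\ell-1}$ --- is a harmless convention issue, since flipping the sign of a whole column does not change the kernel.
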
 

\begin{Example}
Take a plane smooth curve and over it the map $\mathcal{O}(-3)^{\oplus 2}\oplus \mathcal{O}(-2) \rightarrow \mathcal{O}$ which is given by the matrix $A = \begin{pmatrix} x^3 & y^3 & z^2\end{pmatrix}.$
We look at the matrix for the second symmetric power as given by Lemma \ref{explicitsymextmatrices}:
$$A_2: \mathcal{O}(-6)^{\oplus 3} \oplus \mathcal{O}(-5)^{\oplus 2} \oplus \mathcal{O}(-6) \longrightarrow \mathcal{O}(-3)^{\oplus 2} \oplus \mathcal{O}(-2).$$ It has the following entries (above and to the left we have written the summands to which the respective columns and rows correspond):

$$\begin{blockarray}{rcccccc}
& \mathcal{O}(-6) & \mathcal{O}(-6) & \mathcal{O}(-5) & \mathcal{O}(-6) & \mathcal{O}(-5) & \mathcal{O}(-4)\\ 
\begin{block}{r(cccccc)}
\mathcal{O}(-3) \, \, \, & 2x^3 & y^3 & z^2 & 0 & 0 & 0 \\
\mathcal{O}(-3) \, \, \, & 0 & x^3 & 0 & 2y^3 & z^2 & 0 \\
\mathcal{O}(-2) \, \, \,  & 0 & 0 & x^3 & 0 & y^3 & 2z^2 \\
\end{block}
\end{blockarray} .$$
\end{Example}

With this background we can formulate the steps we have to follow.

\begin{Algorithm}\label{mainalgorithm}
This algorithm decides semistability following the me\-thod of Theorem \ref{maintheoremsym}.
\begin{enumerate}
\item Start with a smooth projective curve $X=\Proj S$ given by a normal domain $S$ and a map $\bigoplus_{i=1}^{n}\mathcal{O}_X(-e_i) \rightarrow \mathcal{O}_X$ described by a matrix $A \neq 0$.
\item Compute the genus $g$ and the degree of $X$ (with the Hilbert polynomial of $X$), $\rank(\ker A)=n-1$ and the slope $\mu(\ker A) = -\frac{\deg \ker A}{n-1}$. 
From this compute $q$ and $k$ as in Theorem \ref{maintheoremsym}.
\item Compute $A_{q}$ as in Lemma \ref{explicitsymextmatrices}. 
It's a map $\bigoplus_{a\in I}\mathcal{O}_X(-e_a') \rightarrow \bigoplus_{b\in J}\mathcal{O}_X(-d_b')$ for some finite index sets $I,J$ and some degrees as computed in the Lemma.
\item Compute the dimension $d$ of the vector space of global sections of the kernel of the $k$-th twist of $A_q$.
\item $\ker A$ is semistable if and only if $d = 0$.
\end{enumerate}
\end{Algorithm}

Because in practice computing the dimension of the kernel becomes a lot more resource-intensive for larger $q$ and the corresponding $k$ it is advisable to first try some lower powers $q'$ and the corresponding $k'$. If we are lucky the dimension of the kernel will already be nonzero in which case we would already know that the sheaf is not semistable.

If one wants to decide semistability with Theorem \ref{maintheorem0} instead of Theorem \ref{maintheoremsym} the only difference in the algorithm is to compute $(A_{\bigwedge,s})_{q}$ instead of just $A_q$, where $q,$ $s$ and $k$ have the values given in Theorem \ref{maintheorem0}.

Note that the computations take place over the coordinate ring $S$, since we have $S_e= \Gamma(X, {\mathcal O}_X(e))$, and can be made with any computer algebra system.

\section{Examples and Computations}
\label{examplesection}

In this section we will give several examples. 
In some of these examples we relate the outcome of our algorithm with more specific methods.

\begin{Example}
Let $X=\Proj S$, $S=\C[x,y,z]/(f)$, where $f$ is a homogeneous polynomial such that $S$ is normal and $X$ smooth. In this example we will consider syzygy sheaves of the form $\Syz(x^n,y^n,z^n)$, which have rank $2$ and degree $-3n\cdot \deg X$. For $n=1$ this is the restriction of the cotangent sheaf of ${\mathbb P}^2$ to the curve.

We first look at $f=x^4+y^3z+z^4$, for which $X$ has genus 3. 
Theorem \ref{maintheoremsym} tells us to look at the symmetric power for $q=7$.
For $n=1$ we have to look at $k=\left\lceil \frac{3qn}{2} \right\rceil - 1 = 10$.
There are no global sections of $\Sym^7(\Syz(x,y,z))(10)$, thus $\Syz(x,y,z)$ is semistable.
For $n=2$, we get $k=20$ and again semistability.
If we look further at $n\leq 10$, for $n=3,4,8,9$ we find destabilizing sections, but for $n=5,6,7,10$ the syzygy sheaves are again semistable as we do not find global sections of the seventh symmetric power in the twists given by Theorem \ref{maintheoremsym}.

We can also look at higher degree curves, for example for $f=x^{10}+y^{9}z+z^{10}$ and find that the syzygy sheaves $\Syz(x^n,y^n,z^n)$ for $n=1,2,3$ are semistable. 
For this curve we have to look at the symmetric power for $q=46$ and for $n=3$ the deciding twist is already $206$.
This means that the resulting matrices become quite large and the computations take a while. 
\end{Example}

\begin{Example}\label{deg4example}
We look at $S:=\C[x,y,z]/(x^4+y^3z+z^4)$, the corresponding smooth projective curve $X\subseteq \Pbb^2$ of genus 3 and the kernel sheaf $\mathcal{F} := \Syz(x^3,y^3,z^2)$ of the surjective map $$A:\mathcal{O}_X(-3)^{\oplus 2}\oplus\mathcal{O}_X(-2)\twoheadrightarrow \mathcal{O}_X, (a_1,a_2,a_3)\mapsto (a_1x^3+a_2y^3+a_3z^2).$$

$\mathcal{O}_X(-3)^{\oplus 2}\oplus\mathcal{O}_X(-2)$ has degree $(-3-3-2)\cdot \deg(X) = -8\cdot4=-32$ and rank $3$, and $\mathcal{O}_X$ has degree $0$ and rank $1$. 
Because the map is surjective we see that $\mathcal{F}$ has degree $-32$, rank $2$ and slope $-16$.

For the first three symmetric powers there are no destabilizing global sections, i.e. $\Gamma(X,\Sym^q\mathcal{F}(k))$ is empty in the twist given by $k=q\cdot4-1$ (the twist coming from Theorem \ref{maintheoremsym}) and lower. For $q=4$ and the corresponding $k=15$ we find destabilizing sections (by Theorem \ref{maintheoremsym} we have to go up to  $q = 7$ and $k=q\cdot4-1= 27$). 

The destabilizing section for $q=4$ can also be detected by the following consideration. We see immediately from the curve equation that for $q=1$ we have the global section $(x,z,z^2)$ of $\mathcal{F}(4)$. The sheaf has slope $0$, thus the proper subsheaf generated by the section shows that $\mathcal{F}$ is not stable. Since the section has a common zero in the point $(0,1,0)$, we may also conclude that it is not semistable. 

The relation with global destabilizing sections of the $4$th symmetric power is as follows. For every global section of $\mathcal{F}(4)$ we get a global section of $\Sym^4(\mathcal{F}(4))$ by taking all possible products of 4 factors (repetitions allowed) out of $x$, $z$ and $z^2$. 
Consider Lemma \ref{explicitpowers}.

Explicitly for the global section $(x,z,z^2)$ we get the global section $v = (x^4,x^3z,x^3z^2,x^2z^2,x^2z^3,x^2z^4,xz^3,xz^4,xz^5,xz^6,z^4,z^5,z^6,z^7,z^8)$. 
Again $\Sym^4(\mathcal{F})(16)$ has slope $0$. 
But we have $x^4 = -y^3z-z^4$, so all entries contain the factor $z$, which we can divide out. 
This way we get a new global section $$v' = (-y^3-z^3,x^3,x^3z,x^2z,x^2z^2,x^2z^3,xz^2,xz^3,xz^4,xz^5,z^3,z^4,z^5,z^6,z^7)$$ of $\Sym^4(\mathcal{F})(15)$.
But $\Sym^4(\mathcal{F})(15)$ has negative slope, so we have shown that $\mathcal{F}$ is not semistable.
\end{Example}

By Grothendieck's splitting principle every vector bundle on the projective line is a direct sum of twists ${\mathcal O}_{ {\mathbb P}^1} (n)$, and such a bundle is only semistable if all twists agree.
But even if $\Pbb^1$ is given as a smooth quadric it is not clear how to find global sections by only looking at the homogeneous coordinate ring. 	The restriction of $\Syz(x,y,z)$ to any smooth quadric $X\subset \Pbb^2$ is isomorphic to $\mathcal{L}^{-3}\otimes \mathcal{L}^{-3}$, hence semistable, where $\mathcal{L} \cong \mathcal{O}_{\Pbb^1}(1)$ under an isomorphism $\Pbb^1\cong X$, but $\mathcal{L}$ can not be seen by looking at the global sections of $\Syz(x,y,z)$ alone.

\begin{Example} \label{quadricsplittingexample}
The ring $S:=\C[x,y,z]/(x^2+y^2+z^2)$ describes a quadric curve. 
It is isomorphic to $\Pbb^1$ and thus has genus 0. 
On it $\Syz(x^2,y^2,xz,yz)$ is locally free of $\rank 3$.

If we twist by $3$ we find global sections, for example $(z,0,-x,0)$. 
This does not give a destabilizing subsheaf however, because the degree of  $\Syz(x^2,y^2,xz,yz)(3)$ is $(9-8)\cdot 2 > 0$. 
The twist by $2$ does not have global sections. 
Global sections in that twist would directly give destabilizing subsheaves. The algorithm tells us that higher symmetric powers have destabilizing sections and that this sheaf is not semistable.

Let's look at the situation from a different angle. As a sheaf on $\Pbb^1$ the syzygy sheaf $\Syz(x^2,y^2,xz,yz)(2)$ is a direct sum of three invertible sheaves and it has degree $-4$. 
So the only possibility without global sections is  $$\Syz(x^2,y^2,xz,yz)(2) = \mathcal{L}^{-1}\oplus\mathcal{L}^{-1}\oplus \mathcal{L}^{-2},$$ where $\mathcal{L}$ is the invertible sheaf of degree one on the quadric when seen as a $\Pbb^1$. 
Note that $\mathcal{L}^2 = \mathcal{O}_X(1)$.

From this direct sum decomposition we can already see that the second symmetric power $\Sym^2(\Syz(x^2,y^2,xz,yz))$ will have three invertible summands of highest degree namely $(\mathcal{L}^{-1}(-2))^2 = \mathcal{O}_X(-5)$ and the second exterior power $\bigwedge^2(\Syz(x^2,y^2,xz,yz))$ will have one such summand.
The twist of interest $k$ in these powers becomes $5$ and indeed, if we twist by $5$ the global section space of the symmetric power turns out to be $3$-dimensional and of the exterior power $1$-dimensional. 
These correspond - in accordance with the theorem - to destabilizing subsheaves.
\end{Example}

\begin{Example}\label{ExteriorGoodExample}
We have seen that any destabilizing subsheaf will be made visible by destabilizing global sections of high enough symmetric powers. 
But computing exterior powers can be very useful if there are destabilizing subsheaves of rank 2 or higher. 
For a concrete example consider $S:=\C[x,y,z]/(x^n+y^n+z^n)$ and $\mathcal{F} = \Syz(x^4+y^2z^2,y^4,z^4,x^7)$ (almost any combination of polynomials of degree 4,4,4 and 7 would do).
The sheaf $\mathcal{F}$ is not semistable, in fact the subsheaf $\mathcal{E} = \Syz(x^4+y^2z^2,y^4,z^4)$ is a destabilizing subsheaf of rank 2:
$$\mu(\mathcal{E}) = \frac{-12n}{2} > \frac{-19n}{3} = \mu(\mathcal{F}).$$
Because of $\mathcal{E}$'s shape as a rank 2 syzygy sheaf its second exterior power becomes $\mathcal{O}_X(-12)$ and has a global section if twisted by 12. 
However even for $n = 5$ the fourth symmetric power $\Sym^4(\mathcal{F})$ is the lowest symmetric power with a destabilizing global section. 
For $n=9$ the lowest symmetric power with a destabilizing global section is $\Sym^{10}(\mathcal{F})$. Consider Table \ref{qmintable}.

\begin{table}[h]
\begin{tabular}{|r||l|l|l|} \hline
	$n$ & Genus $g$ & $q_{\min}$ & $q$ as of Theorem \ref{maintheoremsym} \\ \hline
	1 & 0 & 1 & 1\\
	2 & 0 & 1 & 7\\
	3 & 1 & 2 & 7\\
	4 & 3 & 1 & 31\\
	5 & 6 & 4 & 61\\
	6 & 10 & 2 & 31\\
	7 & 15 & 8 & 127\\	
	8 & 21 & 2 & 169\\
	9 & 28 & 10 & 73 \\
	10 & 36 & 8 & 271 \\ \hline
\end{tabular}
\bigskip
\caption{\label{qmintable}Table detailing the situation of Example \ref{ExteriorGoodExample} for several $n$. 
$q_{\min}$ is the lowest power $q=q_{\min}$ for which $\Sym^{q}(\mathcal{F})$ has a destabilizing section. 
We computed this with our implementation of the algorithm.}
\end{table}
\end{Example}

\begin{Example}
Smooth curves $X$ in the projective plane have genus $g=\frac{(\deg X-1)(\deg X -2)}{2}$. 
So with these, we only get $g=1,3,6,10,\ldots$. 
But embedded in higher dimensional projective space we can find curves to work over of any genus. 

A smooth curve of type $(a,b)$ in $\Pbb^1\times\Pbb^1$ has genus $g = (a-1)(b-1)$. 
As an explicit example look at the curve $X$ given by the relation $f = x_0^3y_0(y_0+y_1) + x_1^3y_1(y_0+2y_1) = 0$ for $([x_0:x_1],[y_0:y_1])\in\Pbb^1\times\Pbb^1$. 
We can work with this in our algorithm by using the Segre-embedding given by $\C[z_{00},z_{01},z_{10},z_{11}] \rightarrow \C[x_0,x_1,y_0,y_1], z_{ij}\mapsto x_iy_j$.

$X$ is then given by generators of the ring-kernel of the Segre-embed\-ding $z_{11}z_{00}-z_{10}z_{01}$ together with the preimage of $(f)$, generated by $z_{00}^2(z_{00}+z_{01})+z_{10}z_{11}(z_{10}+2z_{11})$ and $z_{00}z_{01}(z_{00}+z_{01})+z_{11}^2(z_{10}+2z_{11})$. 
By the Jacobi-Criterion and looking at the Hilbert polynomial we see that $X$ is indeed a smooth curve of genus 2 and degree 5.
We have a curve of type $(2,3)$. It is smooth and thus with regard to the Segre-embedding it is projectively normal \cite[Exercise III.5.6(b)(3)]{hartshorne}, i.e. the ring with these three relations is normal.

As example sheaves we compute: $\Syz(z_{01}z_{11}+z_{00}^2,z_{01}+z_{11},z_{10}z_{00}+z_{01}^2,z_{10})$ over $X$ is semistable, but $\Syz(z_{01}z_{11}+z_{00}^2,z_{11},z_{10}z_{00}+z_{01}^2,z_{10})$ is not.
\end{Example}

\section{A variant in positive characteristic}

We needed characteristic 0 to ensure that the symmetric and exterior powers of semistable sheaves are again semistable. 
This is not true in positive characteristic.
However, in positive characteristic the Frobenius pullbacks allow us to construct a very similar algorithm. 

The \emph{Frobenius pullback} $\Frob{e}\mathcal{F}$ of a locally free sheaf $\mathcal{F}$ is the pullback of $\mathcal{F}$ by the $e$-th power of the Frobenius homomorphism $f\mapsto f^{p^e}$. 
There is a surjective map from $\Frob{e}\Syz(f_1,\ldots,f_n) \rightarrow \Syz(f_1^{p^e},\ldots,f_n^{p^e})$. 
It is even bijective if the $f_i$ are primary to the irrelevant ideal $S_+$. 
Thus global sections of both sheaves are the same.

We mention the following immediate lemma.
\begin{Lemma}\label{Frobeniuspowerslope}
Let $\mathcal{F}$ be a locally free sheaf over a scheme of characteristic $p$. Then $\deg\left(\Frob{e}\mathcal{F}\right) = p^e\cdot\deg(\mathcal{F})$ and $\mu\left(\Frob{e}\mathcal{F}\right) = p^e\cdot\mu(\mathcal{F})$.
\end{Lemma}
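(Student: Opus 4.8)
The plan is to reduce the degree statement to the case of an invertible sheaf and to handle the rank separately. Since the Frobenius morphism $\operatorname{F}^{e}\colon X\to X$ is the identity on the underlying topological space, pullback along it carries a locally free sheaf to a locally free sheaf of the same rank, so $\rank(\Frob{e}\mathcal{F}) = \rank(\mathcal{F}) =: r$. Moreover pullback is a tensor functor and hence commutes with the exterior power, giving $\bigwedge^r \Frob{e}\mathcal{F} \cong \Frob{e}\bigwedge^r\mathcal{F}$. Because $\deg\mathcal{F} = \deg\bigwedge^r\mathcal{F}$ by definition, it therefore suffices to prove the degree formula for the invertible sheaf $\mathcal{L} := \bigwedge^r\mathcal{F}$.

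For an invertible sheaf the key observation is that $\Frob{e}\mathcal{L} \cong \mathcal{L}^{\otimes p^e}$. I would verify this on an open cover trivializing $\mathcal{L}$: if $\mathcal{L}$ has transition functions $g_{ij}$, then, since the $e$-th Frobenius acts on regular functions by $f\mapsto f^{p^e}$, the sheaf $\Frob{e}\mathcal{L}$ has transition functions $g_{ij}^{p^e}$, which are exactly the transition functions of $\mathcal{L}^{\otimes p^e}$. Equivalently, in terms of divisors, $\Frob{e}$ pulls back the divisor $D$ with $\mathcal{L}=\mathcal{L}(D)$ to $p^e D$.

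It then remains to compute $\deg(\mathcal{L}^{\otimes p^e})$. Applying Lemma \ref{degranktor} repeatedly to a product of two invertible sheaves yields $\deg(\mathcal{L}^{\otimes m}) = m\deg\mathcal{L}$ for all $m\in\N$, so $\deg(\Frob{e}\mathcal{L}) = p^e\deg\mathcal{L}$. Combining with the first paragraph gives $\deg(\Frob{e}\mathcal{F}) = \deg(\Frob{e}\mathcal{L}) = p^e\deg\mathcal{L} = p^e\deg\mathcal{F}$. The slope statement then follows immediately, since the rank is preserved: $\mu(\Frob{e}\mathcal{F}) = \frac{p^e\deg\mathcal{F}}{r} = p^e\mu(\mathcal{F})$.

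The only nonformal point, and hence the main obstacle, is the identification $\Frob{e}\mathcal{L}\cong\mathcal{L}^{\otimes p^e}$, i.e. the fact that the absolute Frobenius multiplies divisor degrees by $p^e$; everything else is bookkeeping with the rank and the determinant. I expect this transition-function computation to be the crux, and one should be careful that it is the \emph{absolute} Frobenius being used here, which is precisely what makes the exponent $p^e$ rather than $1$ appear.
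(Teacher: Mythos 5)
Your argument is correct: reducing to the determinant line bundle via $\bigwedge^r\Frob{e}\mathcal{F}\cong\Frob{e}\bigwedge^r\mathcal{F}$, identifying $\Frob{e}\mathcal{L}\cong\mathcal{L}^{\otimes p^e}$ through the transition functions $g_{ij}\mapsto g_{ij}^{p^e}$ of the absolute Frobenius, and invoking Lemma \ref{degranktor} for $\deg(\mathcal{L}^{\otimes m})=m\deg\mathcal{L}$ is exactly the standard route. The paper itself offers no proof at all (it labels the lemma ``immediate''), so your write-up simply supplies the details the authors suppressed; the only thing to note is that the degree and slope are really being computed on a smooth projective curve, where the paper's definition of $\deg$ via the determinant applies.
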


\begin{Definition}
Let $\mathcal{F}$ be a locally free sheaf over a smooth projective curve over an algebraically closed field of characteristic $p$. 
$\mathcal{F}$ is called \emph{strongly semistable} if every Frobenius pullback is semistable.
\end{Definition}

It follows directly that $\mathcal{F}$ is strongly semistable if and only if $\Frob{e}(\mathcal{F})$ is strongly semistable.

As already mentioned in the introduction, a positive characteristic version of Theorem \ref{maintheoremsym} was already proved in \cite[Lemma 2.1]{brestab2013}. 
Note that there is a small mistake in the statement of a corollary \cite[Lemma 2.2]{brestab2013} with regards to the necessary Frobenius pullback, which we have corrected in our statement of the theorem.

\begin{Theorem}
Let $\mathcal{F}$ be a locally free sheaf on a smooth projective curve $X$ over a field of characteristic $p$ and $r:=\rank{\mathcal{F}}$.

$\mathcal{F}$ is strongly semistable if and only if there does not exist a nontrivial global section of $\Frob{e}(\mathcal{F})\otimes\mathcal{O}(k)$, for every $e\in\N$ and $k=\left\lceil \frac{-p^{e}\mu(\mathcal{F})}{\deg(X)} \right\rceil - 1$.

$\mathcal{F}$ is semistable if there does not exist a nontrivial global section of $\Frob{e}(\mathcal{F})\otimes\mathcal{O}(k)$, for an exponent $e\in \N$ with $p^e \geq (g-1+\deg(X))n+1$, $n=\frac{r(r-1)}{\gcd(r,\deg\mathcal{F})}$ and $k=\left\lceil \frac{-p^{e}\mu(\mathcal{F})}{\deg(X)} \right\rceil - 1$.
\end{Theorem}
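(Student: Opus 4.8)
The plan is to mirror the characteristic $0$ proof of Theorem \ref{maintheoremsym}, replacing the role of symmetric powers by Frobenius pullbacks. The key structural fact in characteristic $p$ is that, by definition, $\mathcal{F}$ is strongly semistable exactly when every $\Frob{e}\mathcal{F}$ is semistable, so the Frobenius pullbacks play the part that the symmetric powers played before: they scale the slope by a large factor (here $p^e$ instead of $q$, see Lemma \ref{Frobeniuspowerslope}) while preserving semistability in the strongly semistable case. I would prove the two assertions separately.

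For the first (the strongly semistable characterization), I would argue both directions. For the \emph{only if} direction, assume $\mathcal{F}$ is strongly semistable. Then each $\Frob{e}\mathcal{F}$ is semistable, and so is the twist $\Frob{e}(\mathcal{F})\otimes\mathcal{O}(k)$ (using that twisting by a line bundle preserves semistability, Lemma \ref{powersemistable}). A direct slope computation using Lemma \ref{Frobeniuspowerslope} gives
\begin{align*}
\mu\left(\Frob{e}(\mathcal{F})\otimes\mathcal{O}(k)\right)
&= p^e\mu(\mathcal{F}) + k\deg X \\
&= p^e\mu(\mathcal{F}) + \left(\left\lceil \tfrac{-p^e\mu(\mathcal{F})}{\deg X}\right\rceil - 1\right)\deg X \\
&< 0,
\end{align*}
so by Lemma \ref{noglobalnegativesemistable} there are no nontrivial global sections. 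For the \emph{if} direction I would prove the contrapositive: if $\mathcal{F}$ is not strongly semistable, some $\Frob{e}\mathcal{F}$ has a destabilizing subbundle $\mathcal{E}$, and the same ceiling-based choice of $k$ forces $\mu(\mathcal{E}\otimes\mathcal{O}(k))$ to be large enough (using $\mu(\mathcal{E})>\mu(\Frob{e}\mathcal{F})$) to guarantee a global section by Lemma \ref{globalsectionroch}, which is then a global section of the ambient sheaf. Here one must be careful about which Frobenius level produces the destabilizing subbundle and to track that the numerical gap survives the twist; this bookkeeping is where the correction noted after \cite[Lemma 2.2]{brestab2013} matters.

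For the second assertion (a sufficient criterion for ordinary semistability), I would argue the contrapositive and transplant the quantitative estimate from the proof of Theorem \ref{maintheoremsym} verbatim, with $p^e$ in place of $q$. Suppose $\mathcal{F}$ is not semistable and pick a destabilizing subbundle $\mathcal{E}\subseteq\mathcal{F}$ of rank $s<r$. Then $\mu(\mathcal{E})-\mu(\mathcal{F})\geq \frac{\gcd(r,\deg\mathcal{F})}{r(r-1)} = \frac{1}{n}>0$, exactly as in Theorem \ref{maintheoremsym}. Using $\mu(\Frob{e}\mathcal{E}) = p^e\mu(\mathcal{E})$ and the hypothesis $p^e\geq(g-1+\deg X)n+1$, the same chain of inequalities yields
\begin{align*}
\mu\left(\Frob{e}(\mathcal{E})\otimes\mathcal{O}(k)\right)
&\geq p^e(\mu(\mathcal{E})-\mu(\mathcal{F})) - \deg X \\
&\geq \tfrac{p^e}{n} - \deg X \\
&\geq g-1+\tfrac{1}{n} \\
&> g-1,
\end{align*}
so $\Frob{e}(\mathcal{E})\otimes\mathcal{O}(k)$ has a nontrivial global section by Lemma \ref{globalsectionroch}. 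Via the inclusion $\Frob{e}\mathcal{E}\hookrightarrow\Frob{e}\mathcal{F}$ (Frobenius pullback is exact on the relevant subbundle inclusion) this is a global section of $\Frob{e}(\mathcal{F})\otimes\mathcal{O}(k)$, completing the contrapositive.

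The main obstacle I anticipate is not the slope arithmetic, which transcribes cleanly, but justifying the flow of subbundles and global sections under Frobenius: I need that a destabilizing subbundle of $\mathcal{F}$ (or of a Frobenius pullback) really does pull back to a destabilizing subbundle with the asserted slope scaling, and that a global section of the twisted pulled-back subbundle genuinely extends to the full pulled-back sheaf. Since Frobenius pullback is right exact but the inclusion $\mathcal{E}\hookrightarrow\mathcal{F}$ of locally free sheaves on a smooth curve remains a subbundle inclusion after pullback (both sheaves stay locally free and the rank is preserved), this works, but it is precisely the point that must be stated carefully to avoid the error flagged in \cite[Lemma 2.2]{brestab2013}.
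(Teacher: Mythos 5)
Your proposal is correct and follows essentially the same route as the paper: the paper likewise proves the quantitative second assertion first (destabilizing subbundle, gap $\geq 1/n$, slope amplification by $p^e$, Riemann--Roch via Lemma \ref{globalsectionroch}), and then handles the strongly-semistable characterization by combining the negative-slope/no-sections argument with the observation that a non-semistable Frobenius pullback eventually produces a destabilizing section at a \emph{higher} power $e$. The one point you flag as needing care --- that the destabilizing gap of $\Frob{e_0}\mathcal{F}$ must be amplified by a further pullback before it exceeds $g-1+\deg X$ --- is real, and is exactly the step the paper compresses into the phrase ``for some (higher) power $e$''.
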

\begin{proof}
We first prove the second assertion, in the same way as in Theorem \ref{maintheoremsym}.
First assume that $\mathcal{F}$ is not semistable. 
Take a destabilizing subbundle $\mathcal{E}\subseteq \mathcal{F}$ of rank $s < r$.
Then 
$$\frac{\deg(\mathcal{E})}{s} = \mu(\mathcal{E}) > \mu(\mathcal{F}) = \frac{\deg(\mathcal{F})}{r}$$ 
and thus 
$$\mu(\mathcal{E})-\mu(\mathcal{F}) = \left(\frac{r\cdot\deg(\mathcal{E})-s\cdot\deg(\mathcal{F})}{r\cdot s}\right)\geq \frac{\gcd(r,\deg(\mathcal{F}))}{r(r-1)} = \frac{1}{n} > 0.$$ 
With this we calculate 
\begin{align*}  \mu\left(\Frob{e}\left(\mathcal{E}\right)\otimes\mathcal{O}(k)\right)
& = p^{e}\mu(\mathcal{E})+k\deg(X) \\
& = p^{e}\mu(\mathcal{E})+\left(\left\lceil \frac{-p^{e}\mu(\mathcal{F})}{\deg(X)} \right\rceil - 1\right)\deg(X)\\
& \geq p^{e}(\mu(\mathcal{E})-\mu(\mathcal{F})) - \deg(X)\\
& \geq \frac{p^{e}}{n}-\deg(X)\\
& \geq \frac{(g-1+\deg(X))n+1}{n}-\deg(X)\\
& = g-1+\frac{1}{n}\\
& > g-1.
\end{align*}
Thus $\Frob{e}(\mathcal{E})\otimes\mathcal{O}(k)$ has a global section by Lemma \ref{globalsectionroch} and this is also a global section of $\Frob{e}(\mathcal{F})\otimes\mathcal{O}(k)$.

Now to the first assertion.
If $\mathcal{F}$ is not strongly semistable, some pullback will not be semistable. 
So for some (higher) power $e$ the Frobenius pullback $\Frob{e}(\mathcal{F})$ will have a destabilizing global section.

Now let us assume $\mathcal{F}$ is strongly semistable. Then so is $\Frob{e}(\mathcal{F})\otimes\mathcal{O}(k)$. We calculate its slope \begin{align*}\mu\left(\Frob{e}\left(\mathcal{F}\right)\otimes\mathcal{O}(k)\right) &= p^{e}\mu(\mathcal{F})+k\deg(X)\\
&= p^{e}\mu(\mathcal{F})+\left(\left\lceil \frac{-p^{e}\mu(\mathcal{F})}{\deg(X)} \right\rceil - 1\right)\deg(X)\\
&< p^{e}\mu(\mathcal{F}) + \frac{-p^{e}\mu(\mathcal{F})}{\deg(X)}\deg(X)\\
&= 0.
\end{align*}
Being semistable with negative slope it can have no nontrivial global sections, so we are done.
\end{proof}

For finite fields it is possible to give a bound on the necessary exponent to ascertain strong semistability. This is because the family of semistable vector bundles of fixed rank and bounded degree is a bounded family, which is itself the basic result for the existence of moduli spaces for vector bundles. Over a finite field there are thus only finitely many isomorphism types of Frobenius pullbacks with suitable twists for a strongly semistable vector bundle. Hence there will be a repetition of the form $\Frob{e}(\mathcal{F})\otimes\mathcal{O}(k) \cong \mathcal{F}  $.
Thus it suffices to check a fixed Frobenius power to determine strong semistability. 
For computational purposes this is not very helpful though, because the required Frobenius power is very high.

Using exterior powers like in the characteristic 0 case allows us to state the following variant of the theorem.

\begin{Theorem}
Let $\mathcal{F}$ be a locally free sheaf on a smooth projective curve $X$ over a field of characteristic $p$ and $r:=\rank{\mathcal{F}}$.

$\mathcal{F}$ is semistable if for every $s<r$ there does not exist a nontrivial global section of $\Frob{e}(\bigwedge^s\mathcal{F})\otimes\mathcal{O}(k)$, where $e = \lceil\log_p((g-1+\deg(X))n+1)\rceil$, $n=\frac{r}{\gcd(r,s\deg(\mathcal{F}))}$ and $k=\left\lceil \frac{-p^{e}s\mu(\mathcal{F})}{\deg(X)} \right\rceil - 1$.

The same is true if we substitute the Frobenius power with the symmetric power in this theorem.
\end{Theorem}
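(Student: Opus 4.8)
The plan is to prove only the stated implication---that the simultaneous vanishing of all these global sections forces semistability---via its contrapositive, exactly along the lines of the first half of Theorem~\ref{maintheorem0}. Because the statement is one-directional, I never need a converse, and in particular I never invoke the semistability of symmetric or exterior powers (Lemma~\ref{powersemistable}). This is the crux: that input is the only place characteristic~$0$ was used in Theorem~\ref{maintheorem0}, and it is precisely what fails in characteristic~$p$, so dropping it is what lets the argument survive.

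First I would assume $\mathcal{F}$ is not semistable and pick a destabilizing subbundle $\mathcal{E}\subseteq\mathcal{F}$ of some rank $s<r$, so that $\mu(\mathcal{E})>\mu(\mathcal{F})$. Taking the $s$-th exterior power gives an inclusion $\bigwedge^s\mathcal{E}\subseteq\bigwedge^s\mathcal{F}$, where $\bigwedge^s\mathcal{E}$ is the determinant line bundle of $\mathcal{E}$, of degree $\deg\mathcal{E}$. The same integrality computation as in Theorem~\ref{maintheorem0} then records the slope gap
\[
s\bigl(\mu(\mathcal{E})-\mu(\mathcal{F})\bigr)=\deg\mathcal{E}-\tfrac{s\deg\mathcal{F}}{r}\geq\frac{\gcd(r,s\deg\mathcal{F})}{r}=\frac{1}{n}>0,
\]
with $n$ the value attached to this particular $s$.

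Next I would push the inclusion through the Frobenius. On the smooth curve $X$ the Frobenius morphism is flat, so $\Frob{e}$ is exact and carries the inclusion of bundles to an inclusion $\Frob{e}(\bigwedge^s\mathcal{E})\subseteq\Frob{e}(\bigwedge^s\mathcal{F})$ in which $\Frob{e}(\bigwedge^s\mathcal{E})$ is again a line bundle. Using Lemma~\ref{Frobeniuspowerslope} together with $\mu(\bigwedge^s\mathcal{E})=\deg\mathcal{E}=s\mu(\mathcal{E})$, I compute, with $k$ as in the statement,
\[
\mu\bigl(\Frob{e}(\bigwedge^s\mathcal{E})\otimes\mathcal{O}(k)\bigr)=p^{e}s\mu(\mathcal{E})+k\deg X\geq p^{e}s\bigl(\mu(\mathcal{E})-\mu(\mathcal{F})\bigr)-\deg X\geq\frac{p^{e}}{n}-\deg X,
\]
where the first inequality uses $\lceil y\rceil-1\geq y-1$ and the second uses the slope gap. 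Since $e=\lceil\log_p((g-1+\deg X)n+1)\rceil$ forces $p^{e}\geq(g-1+\deg X)n+1$, the right-hand side is $\geq g-1+\tfrac1n>g-1$. By Lemma~\ref{globalsectionroch} the line bundle $\Frob{e}(\bigwedge^s\mathcal{E})\otimes\mathcal{O}(k)$ then has a nontrivial global section, which (twisting and pullback preserve injectivity) is also a nontrivial global section of $\Frob{e}(\bigwedge^s\mathcal{F})\otimes\mathcal{O}(k)$. This contradicts the hypothesis at the value of $s$ equal to $\rank\mathcal{E}$, establishing the contrapositive.

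For the symmetric-power variant I would run the identical argument with $\Frob{e}(-)$ replaced by $\Sym^{q}(-)$ for $q=(g-1+\deg X)n+1$ and $k$ adjusted by substituting $q$ for $p^{e}$: the inclusion $\Sym^{q}(\bigwedge^s\mathcal{E})\subseteq\Sym^{q}(\bigwedge^s\mathcal{F})$ is still an inclusion of a line bundle, the identity $\mu(\Sym^{q}\mathcal{L})=q\mu(\mathcal{L})$ (from the corollary to Lemma~\ref{degranksymext}) plays the role of Lemma~\ref{Frobeniuspowerslope}, and the same chain of inequalities yields slope $>g-1$, hence a global section. I expect there to be no deep obstacle here; the only point requiring genuine care is the flatness of Frobenius on the regular curve, which is what guarantees that $\bigwedge^s\mathcal{E}\subseteq\bigwedge^s\mathcal{F}$ stays injective after pullback, while the remaining estimates are the bookkeeping already performed for Theorem~\ref{maintheorem0}.
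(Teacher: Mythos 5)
Your proposal is correct and takes essentially the same route as the paper, which simply notes that the forward direction of the proof of Theorem~\ref{maintheorem0} carries over verbatim to characteristic $p$ (with $p^{e}$ in place of $q$ and Lemma~\ref{Frobeniuspowerslope} supplying the slope formula); your key observation that only the one-directional claim is made, so the characteristic-$0$ input (Lemma~\ref{powersemistable}) is never needed, is exactly the point the paper is relying on.
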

\begin{proof}
For this direction the proof in Theorem \ref{maintheorem0} works fully for characteristic $p$ (for the Frobenius power as well as the symmetric power). A destabilizing section prohibits semistability.
\end{proof}

\begin{Remark}
It is possible to check semistability for the characteristic $0$ case by reduction modulo $p$.
This is based on the fact that any destabilizing subsheaf in characteristic $0$ would also occur as a destabilizing subsheaf in characteristic $p$.
Thus we could try primes $p$ until we find one for which the sheaf over characteristic $p$ is semistable.
Then we know that the corresponding characteristic $0$ sheaf is also semistable.
This process has the potential to be a computationally faster way to show semistability, because arithmetic modulo $p$ is faster.
This effect is diminished by the fact that we might have to try a lot of primes and that the degrees grow faster with Frobenius pullbacks.

Even more, if for all primes that we try the sheaves are not semistable, then we can not draw a conclusion about the semistability of the corresponding characteristic $0$ sheaf.
In particular with this method we can never decide semistability for a sheaf that is not semistable.
Still reduction modulo $p$ could be a useful part in an adaptive approach to determining semistability where you try different angles of attack at the same time.
\end{Remark}

\section{Implementation details}

We need to compute the global sections of $\left(\Sym^q\bigwedge^s\mathcal{F}\right)\otimes\mathcal{O}(k)$, for some $q,s\in \N_{\geq 0}, k\in \Z$ over a curve $X=\Proj S$, with $S$ a graded integrally closed algebra of finite type over $K$.

\begin{Remark} \label{degreematrix}
As laid out in Lemma \ref{explicitsymextmatrices} and Algorithm \ref{mainalgorithm} we construct a matrix $A_q$ which sits in an exact sequence $$0 \longrightarrow \Sym^q\mathcal{F}\otimes\mathcal{O}_X(k) \longrightarrow  \bigoplus_{a\in I}\mathcal{O}_X(k-a\cdot e) \overset{A_q}{\longrightarrow} \bigoplus_{(b,j)\in J}\mathcal{O}_X(k-b\cdot e-d_j).$$
We want to compute the dimension of the vector space of global sections of the kernel of $A_q$.
You will find the dimension as an entry in the Betti table of the module presented by $A_q$, for which there are implementations in many computer algebra systems.
For very simple cases we did this in Macaulay2\cite{M2} and CoCoA\cite{CoCoA}.
This approach proved to be inefficient for anything but the most simple examples.
It turned out to be way faster to only compute the correct degree case as follows.

We apply the global section functor to the exact sequence and get the exact sequence $$0 \longrightarrow \Gamma(X,\Sym^q\mathcal{F}\otimes\mathcal{O}_X(k)) \longrightarrow  \bigoplus_{a\in I}S_{k-a\cdot e} \overset{A_q}{\longrightarrow} \bigoplus_{(b,j)\in J}S_{k-b\cdot e-d_j}.$$

For a fixed $d\in \Z$ the elements of $S_d$ form a finite dimensional vector space, with a basis given by the degree $d$ monomials of $R$ which are not multiples of leading monomials of a Gröbner basis of the defining ideal of $S$ (fix any degree-respecting monomial order).

All we have to compute is thus the kernel of a matrix $B_q(k)$ (computed from $A_q$) over a field, a linear algebra problem.
\end{Remark}

\begin{Example}\label{degreematrixexample}
Look at the map $\mathcal{O}_X(-4)^3\oplus \mathcal{O}_X(-7) \rightarrow \mathcal{O}_X$ over $X=\Proj(\C[x,y,z]/(x^9+y^9+z^9))$ from Example \ref{ExteriorGoodExample} which is given by the matrix $A = \begin{pmatrix} z^2y^2 + x^4 & y^4 & z^4 & x^7 \end{pmatrix}$. 

In the first power Theorem \ref{maintheoremsym}  tells us to look at the twist 6, thus we get a map $\mathcal{O}_X(2)^3\oplus \mathcal{O}_X(-1) \rightarrow \mathcal{O}_X(6)$. 
The global sections of $\mathcal{O}_X(2)^3\oplus \mathcal{O}_X(-1)$ are given by a tuple of three degree 2 elements and one degree -1 element. 
The degree $2$ elements have a monomial basis $z^2,zy,zx,y^2,yx,x^2$ with 6 generators.
The only element in negative degrees is $0$.
On the other hand, there are 28 monomials of degree 6.
Thus in the relevant twist $6$ we get a 28x18-matrix $B(6)$ with entries in $\C$, see Table \ref{Aqdmatrix}. 

\begin{table}[h]
\scalebox{0.6}[0.6]{
$\begin{blockarray}{rcccccc|cccccc|cccccc}
 & z^2 & zy& zx& y^2& yx& x^2 & z^2& zy& zx& y^2& yx& x^2 & z^2& zy& zx& y^2& yx& x^2 \\ 
\begin{block}{r(cccccc|cccccc|cccccc)}
z^6 && & & & & & & & & & & & 1& & & & & \\
z^5y && & & & & & & & & & & & & 1& & & & \\
z^5x && & & & & & & & & & & & & & 1& & & \\
z^4y^2 &1& & & & & & & & & & & & & & & 1& & \\
z^4yx && & & & & & & & & & & & & & & & 1& \\
z^4x^2 && & & & & & & & & & & & & & & & & 1\\
z^3y^3 && 1& & & & & & & & & & & & & & & & \\
z^3y^2x && & 1& & & & & & & & & & & & & & & \\
z^3yx^2 && & & & & & & & & & & & & & & & & \\
z^3x^3 && & & & & & & & & & & & & & & & & \\
z^2y^4 && & & 1& & & 1& & & & & & & & & & & \\
z^2y^3x && & & & 1& & & & & & & & & & & & & \\
z^2y^2x^2 && & & & & 1& & & & & & & & & & & & \\
z^2yx^3 && & & & & & & & & & & & & & & & & \\
z^2x^4 &1& & & & & & & & & & & & & & & & & \\
zy^5 && & & & & & & 1& & & & & & & & & & \\
zy^4x && & & & & & & & 1& & & & & & & & & \\
zy^3x^2 && & & & & & & & & & & & & & & & & \\
zy^2x^3 && & & & & & & & & & & & & & & & & \\
zyx^4 && 1& & & & & & & & & & & & & & & & \\
zx^5 && & 1& & & & & & & & & & & & & & & \\
y^6 && & & & & & & & & 1& & & & & & & & \\
y^5x && & & & & & & & & & 1& & & & & & & \\
y^4x^2 && & & & & & & & & & & 1& & & & & & \\
y^3x^3 && & & & & & & & & & & & & & & & & \\
y^2x^4 && & & 1& & & & & & & & & & & & & & \\
yx^5 && & & & 1& & & & & & & & & & & & & \\
x^6 && & & & & 1& & & & & & & & & & & & \\
\end{block}
\end{blockarray}$}

\caption{\label{Aqdmatrix}The matrix $B(6)$ from Example \ref{degreematrixexample}.
We have written the corresponding monomial basis elements on the top and to the left. 
0-entries have been omitted.}
\end{table}

\begin{table}[h]
\begin{tabular}{|r||l|l|l|l|l|} \hline
$q$ & $A_q$ & $k$ & $B_q(k)$ & $\dim \ker B_q(k)$ & $\Delta t$\\ \hline
1 & $1\times4$ & 6 & $28\times18$ & 0 & $<1$ms\\
2 & $4\times10$ & 12 & $156\times99$ & 0 & $<1$ms\\
3 & $10\times20$ & 18 & $501\times343$ & 0 & 2ms\\
4 & $20\times35$ & 25 & $1401\times1153$ & 0 & 8ms\\
5 & $35\times56$ & 31 & $2848\times2433$ & 0 & 30ms\\
6 & $56\times84$ & 37 & $5190\times4551$ & 0 & 98ms\\
7 & $84\times120$ & 44 &  $9474\times8763$ & 0 & 404ms\\
8 & $120\times165$ & 50 & $14889\times13891$ & 0 & 1s\\
9 & $165\times220$ & 56 & $22339\times20985$ & 0 &  4s\\
10 & $220\times286$ & 63 & $34219\times32865$ & 2 & 11s\\
11 & $286\times364$ & 69 & $47718\times45930$ & 0 & 27s\\
12 & $364\times455$ & 75 & $64845\times62551$ & 2& 69s\\	
13 & $455\times560$ & 82 & $90234\times88066$ & 128 & 169s\\
\vdots & & & & & \\
16 & $816\times969$ & 101 & $196743\times193608$ & 452 & 1743s \\ 
\vdots & & & & & \\
73 & $67525\times70300$ & 462 & very large & ? & ? \\ \hline
\end{tabular}
\bigskip
\caption{\label{matrixsizetable}This table accompanying Example \ref{degreematrixexample} lists the sizes of various matrices $A_q,$ the degree $k$ considered, and the size of the matrix $B_q(k)$. 
$\Delta t$ is the time to compute the kernel with our implementation on our computer. 
Since the actual computation time varies between computers and because there may be future optimizations these runtime numbers are only meant to illustrate the general trend.}
\end{table}

Consider Table \ref{matrixsizetable} for the sizes of the resulting matrices as $q$ grows. Note that $q=73$ is the power from Theorem \ref{maintheoremsym}.
\end{Example}

\begin{Remark}\label{sparsematricesremark}
The resulting matrices from Remark \ref{degreematrix} have an enormous size, prohibiting dense representations in computer memory.
Fortunately only few entries are nonzero.
If $A$ is an $m\times n$ matrix the matrix $A_q$ has only $n$ nonzero entries in each row, while it has $\binom{q+n-1}{n-1}$ columns. 
The matrix $B_q(k)$ is even more sparse assuming the polynomial entries of $A$ are sparse in the sense that they are made up of relatively few monomials compared to all monomials in their degree.
We can also see this phenomenon in the matrix of Table \ref{Aqdmatrix}.
There every column only has as many nonzero entries as the corresponding polynomial has nonzero coefficients.

To do any useful computations it is thus very important to store the matrices in a sparse matrix format.
This means that only the nonzero entries and their positions are stored.
This has not only the benefit of requiring less memory, it also means that we only need to iterate over the nonzero-entries in every reduction step.
We will perform matrix factorization - i.e. the process of factoring the matrix into triangle matrices and unitary matrices - in order to compute the kernel.
There is a lot of potential for optimization in the factorization of sparse matrices because naive implementations tend to introduce unnecessarily many additional nonzero entries making the matrix less sparse in the process.
It's important to reduce the rows in a good order and to choose good pivot elements.

There are extremely well optimized algorithms for sparse matrix factorization - but only for floating point values. 
One floating-point-algorithm we tried out is SuiteSparseQR\cite{Davis2011Algorithm9S}.
However, it does not seem practical (or maybe even possible) to control the cumulative floating point error in a way that let us with certainty distinguish a kernel of dimension 1 from a kernel of dimension 0.

Thus for an implementation of the algorithm we need to be able to do sparse exact value matrix triangularization.
Unfortunately most exact value matrix factorization implementations only work on dense matrices (for example it is implemented in Normaliz\cite{Normaliz}).
We considered using the sparse implementation in Bradford Hovinen's LELA\cite{LELA}, but it has only an optimized algorithm for matrices of the type occuring in Faugère's F4-algorithm and it proved difficult to use.

Because we didn't find a suitable implementation for integer matrix triangularization that suited our needs we implemented our own version of the Gauss Algorithm for sparse matrices in C++.
\end{Remark}

\begin{Remark}
For some of our results we need to work over an algebraically closed field.
Of course we can't actually represent complex numbers or any other uncountable field in computer memory. 
However if all involved coefficients in the input (which are the generators of the defining ideal $I$ of the base ring and the matrix $A$) are in $\Q$ any resulting kernel sections will also just have coefficients in $\Q$.
Even more, if the leading coefficients of a Gröbner basis of $I$ are units in $\Z$ and all entries of $A$ have coefficients only in $\Z$ we can do the whole kernel computation only with values in $\Z$.
Even though we then have to be careful in the Gauss Algorithm this is still faster and uses less memory than a representation in $\Q$.

Of course it would be possible to work with $\Q$ adjoint with a finite number of additional elements of $\C$, but we haven't explicitly implemented this.
Introducing an additional variable to the base polynomial ring and the necessary defining equations would be relatively easy but very costly.
\end{Remark}

\begin{Remark}
The performance of the algorithm depends a lot on the input. We list some of the characteristics and how they affect performance.
\begin{itemize}
\item Degree $d$ and genus $g$ of the curve $X=\Proj(S)$ are important in two ways.
Firstly, we have the symmetric power exponent $q=(g-1+d)n+1$ as of Theorem \ref{maintheoremsym}.
The higher $q$ is, the larger the matrix $A_q$ becomes, as $A_q$ is an $m\cdot \binom{q+l-2}{l-2}\times \binom{q+l-1}{l-1}$ matrix, where $m\times l$ are the dimensions of $A$.

Additionally $d$ and $g$ affect the size of the matrix $B_q(k)$: 
The Hilbert Polynomial of $X$ is $\Hilb(S)(t) = t\cdot d +(1-g)$. 
For large enough $t$ the values of $\Hilb(S)(t)$ are the dimensions of the vector spaces of degree $t$ elements of $S$.
Thus again larger $d$ are very costly here, while $g$ is only in the constant coefficient of the Hilbert polynomial and so does not affect the size as much.

\item The dimensions of the $m\times l$ matrix $A$ affect the size of $A_q$ directly as seen in the previous point. 
But it also goes into the rank and degree and thus the slope of $\mathcal{F}$.
Recall again Theorem \ref{maintheoremsym} and look at $n=\frac{r(r-1)}{\gcd(r,\deg\mathcal{F})}$, which multiplies into $q$.
At best this is $r-1$ and at worst $r(r-1)$.
Thus $r=l-m$ goes into the exponent $q$ linearly at best and quadratic in the worst case.

The twist is computed as $k=\left\lceil \frac{-q\mu(\mathcal{F})}{\deg X} \right\rceil - 1$. 
Here the degree of $\mathcal{F}$ enters, which is computed from the degrees of the entries of $A$.
High degree entries lead to a high twist.
A higher twist means higher degree of the monomials determining $B_q(k)$, thus there are more of them and $B_q(k)$ has a larger size.

\item The number of monomials used for the polynomials is also a strong factor. For the entries of $A$ this was explained in Remark \ref{sparsematricesremark}.

But for the Gröbner basis elements of $\mathfrak{a}$, where $\mathfrak{a}$ is the defining ideal of the curve, the same is true. 
If the result from a multiplication in the matrix $A_q$ is a leading monomial of an element $f$ in the reduced Gröbner basis, then it will be represented in $B_q(k)$ in the rows corresponding to all the other monomials of $f$.
If there are more monomials with nonzero coefficients then the matrix will be less sparse.
Thus we can say that the more ``general" the curve is, i.e. the more nonzero coefficients there are in the defining polynomials, the harder it is to compute with.

\item The coefficients of the polynomials involved also play a minor role.
Because of the way the integer Gauss algorithm works, during the triangularization the absolute values of the entries in $B_q(k)$ will generally increase a lot.
Because of this fixed length integer data types do not suffice as data containers and we need to use multiprecision integer data types, i.e. integer data types with an arbitrary length.
If we start with large values we will need even more digits to store the entries, which also increases the time used to handle them.
\end{itemize}

\end{Remark}

\begin{Remark}
For convenience of use, helpful future improvements to our implementation might include the following.
\begin{itemize}
\item An included feature to check whether the input ring is normal and to automatically work over the normalization if it isn't. 
So far we only check smoothness, and that the scheme is a curve at all, which are relatively easy to check.
\item The ability to embed any sheaf as a kernel sheaf automatically.
\item Further performance improvements to be able to check more and more difficult sheaves for semistability.
\end{itemize}
\end{Remark}

You will find our implementations of the main algorithm and the necessary subroutines together with an explanation on how to use them online at \url{https://github.com/JonathanSteinbuch/sheafstability}.

\bibliography{brennersteinbuchstabilityalgorithm}
\bibliographystyle{plain}
\end{document}